\numberwithin{equation}{section}
\renewcommand{\b}{\beta}
\newcommand{\g}{\gamma}
\newcommand{\G}{\Gamma}
\renewcommand{\d}{\delta}
\newcommand{\D}{\Delta}
\newcommand{\e}{\epsilon}
\newcommand{\ve}{\varepsilon}
\newcommand{\z}{\zeta}
\renewcommand{\k}{\kappa}
\renewcommand{\l}{\lambda}
\renewcommand{\L}{\Lambda}
\newcommand{\m}{\mu}
\newcommand{\n}{\nu}
\newcommand{\x}{\xi}
\renewcommand{\t}{\tau}
\newcommand{\C}{{\mathbb C}}
\newcommand{\R}{{\mathbb R}}
\newcommand{\bb}{{\mathbf b}}
\newcommand{\vb}{{\mathbf v}}
\newcommand{\Bb}{{\mathbf B}}
\newcommand{\Db}{{\mathbf D}}
\newcommand{\Rb}{{\mathbf R}}
\newcommand{\Vb}{{\mathbf V}}
\newcommand{\Yb}{{\mathbf Y}}
\newcommand{\Zb}{{\mathbf Z}}
\newcommand{\AF}{\mathfrak A}
\newcommand{\GF}{\mathbf G}
\newcommand{\HF}{\mathbf H}
\newcommand{\KF}{\mathbf K}
\newcommand{\LF}{\mathfrak L}
\newcommand{\PF}{\mathbf P}
\newcommand{\QF}{\mathbf Q}
\newcommand{\SF}{\mathfrak S}
\newcommand{\tF}{\mathfrak t}
\newcommand{\TF}{\mathfrak T}
\newcommand{\wF}{\mathfrak w}
\newcommand{\Ac}{{\mathcal A}}
\newcommand{\Fc}{{\mathcal F}}
\newcommand{\Gc}{{\mathcal G}}
\newcommand{\Hc}{{\mathcal H}}
\newcommand{\Lc}{{\mathcal L}}
\newcommand{\Tc}{{\mathcal T}}
\newcommand{\Xc}{{\mathcal X}}
\newcommand{\Zc}{{\mathcal Z}}
\newcommand{\diag}{{\rm diag}\,}
\newcommand{\curl}{{\rm curl}\,}
\newcommand{\sgn}{\hbox{{\rm sign}}\,}
\newcommand{\Bn}{\mathbf{B^\circ}}
\newcommand{\Q}{\QF}
\newcommand{\Qa}{\overline{\QF}{}}
\newcommand{\QFc}{\overline{\QF^\circ}{}}
\newcommand{\pd}{\partial} 
\newcommand{\pa}{\bar{\partial}}
\newcommand{\Rm}{R_-}
\newcommand{\Rp}{R_+}
\newcommand{\bl}{\bm{[}}
\newcommand{\br}{\bm{]}}
\newcommand{\meas}{\operatorname{meas\,}}
\newtheorem{theorem}{Theorem}[section]
\newtheorem{proposition}[theorem]{Proposition}
\newtheorem{lemma}[theorem]{Lemma}
\theoremstyle{definition}
\theoremstyle{remark}
\begin{document}

\title[Moderately Decaying Perturbations]{On the Spectral
 Properties of the Landau Hamiltonian Perturbed by a moderately Decaying Magnetic Field}

\author[Rozenblum]{Grigori Rozenblum}
\address[G. Rozenblum]{Department of Mathematics \\
                        Chalmers University of Technology
                        Sweden}
                        \address[G. Rozenblum]{Department of Mathematics\\ University of Gothenburg
                        Sweden}
\email{grigori@chalmers.se}
\author[Tashchiyan]{Grigory  Tashchiyan}
\address[G. Tashchiyan]{Department of Mathematics
\\                    St. Petersburg University
for Telecommunications\\
St.Petersburg, 198504, Russia}
\email{grigori.tashchiyan@gmail.com}

\begin{abstract}
The Landau Hamiltonian, describing the behavior of a
quantum particle in dimension 2 in a constant magnetic
field, is perturbed by a  magnetic field with
power-like decay at infinity and a similar electric
potential. We describe how the spectral subspaces
change and how the Landau levels split under this
perturbation.
\end{abstract}
\subjclass[2000]{35P20}

\date{\today}
\keywords{Pauli operator, Landau levels, Eigenvalue
Asymptotics}

\maketitle


\section{Introduction}
\label{intro}

 The paper is devoted to the study of the spectrum of
the Schr\"odinger and Pauli operators in the plane,
with nonzero constant magnetic field perturbed by a
smooth magnetic and electric fields that decay
power-like at infinity. It continues the paper
\cite{RozTa} by the authors, where the case of a
compactly supported perturbation was considered.

 The Landau Hamiltonian describing the
motion of the quantum particle in two dimensions under
the influence  of the constant magnetic field is one
of the classical models in quantum physics. The
spectrum (found first in \cite{Fock}, see also
\cite{LanLif}) consists of  eigenvalues with infinite
multiplicity lying at the points of an arithmetic
progression. These eigenvalues are traditionally
called Landau levels (LL) and the corresponding
spectral subspaces are called Landau subspaces.

A natural question arises, what happens with the
spectrum of the Landau Hamiltonian under the
perturbation by a weak electrostatic potential or/and
magnetic field. One should expect that the Landau
levels split, and the problem consists in describing
quantitatively this splitting as well as in studying
the behavior of the spectral subspaces under the
perturbation.

The case of the perturbation by an electric potential
$V$, moderately, power-like decaying at infinity,  was
first studied by Raikov in \cite{Raikov1}.
 The case of a
fast decaying (or compactly supported) electric
potential was dealt with much later, in \cite{RaiWar},
see also \cite{MelRoz}. It was found that the
character of the Landau levels splitting depends
essentially on the rate of decay of $V$ and the
asymptotics of the eigenvalues in clusters can be
expressed in the terms of $V$, quasi-classically or
not.

When the magnetic field is also perturbed, the
situation becomes more complicated since is not the
magnetic field itself  but its potential that enters
in the quantum Hamiltonian. So, the perturbation of
the operator turns out to be
 fairly strong  even for a compactly supported
perturbation of the field and it   may even be not
relatively compact if the perturbation goes to zero at
infinity  not sufficiently fast. Iwatsuka \cite{Iwats}
proved that the invariance of the essential spectrum
still takes place, so Landau levels are the only
possible limit points of eigenvalues lying in the gaps
between them. Further on, in \cite{FilPush}, \cite{IwaTam1},
\cite{IwaTam2},  \cite{Raikov2} the character of the splitting of the
lowest Landau level was investigated. For compactly
supported  magnetic field perturbation  and electric
potential the splitting of all Landau levels was
studied in \cite{RozTa}. The main result of
\cite{RozTa} was the description of the spectral
subspaces of the perturbed operator corresponding to
the clusters around Landau levels. It was found that
these subspaces change  fairly strongly, and a rather
exact approximation for these subspaces was found in
the terms of modified creation and annihilation
operators. At the same time, although the perturbation
of the operator may be very strong, the splitting of
eigenvalues is super-exponentially weak, just like it
is  in the case of a perturbation by a compactly
supported electric field only.

In  the present paper we continue the study of the
Landau Hamiltonian with a perturbed magnetic field.
Now we  consider  the case of the perturbation
decaying moderately, power-like, at infinity. For the
spectral subspaces the results similar to the ones in
\cite{RozTa} hold. As for the asymptotics of the
eigenvalues in clusters, we obtain more complete
results, proving estimates and, under some natural
conditions, the power-like asymptotic formulas for
eigenvalues. The problem of the splitting of Landau levels under moderately decaying perturbations of the magnetic field has been considered in \cite{Ivrii}, see Theorem 11.3.17 there, where even the remainder term of the asymptotics of the eigenvalues in clusters was found. Powerful methods of microlocal analysis are used in \cite{Ivrii}. Our methods are more elementary, give the approximation for the spectral subspaces,  moreover,   we do not need the rather restrictive 'hyperbolicity' condition (11.3.49) imposed in \cite{Ivrii}.

The paper is heavily based upon the
methods and results of the papers  \cite{IwaTam1} and
\cite{RozTa}. Following the structure of the latter
paper, we   refer rather shortly to the fragments that
should be repeated, word for word or with minor
changes only, in order to be able to concentrate on
important differences. For convenience of references,
the numbering of sections here coincides with that in
\cite{RozTa}.

The second author was partly supported by a grant from
the Royal Swedish Academy of Sciences, he also thanks
Chalmers University of Technology in Gothenburg for
hospitality.

\section{Magnetic Schr\"odinger  and Pauli operators}
\label{setting}

\subsection{The unperturbed operators} We will denote the points in the plane
$\R^2$ by $x=(x_1,x_2)$; it is convenient to identify
$\R^2$ with $\C$ by setting $z=x_1+i x_2$. So, the
Hilbert space $L_2(\R^2)$ with   Lebesgue measure
(which will be denoted by $dx$) is identified with
$L_2(\C)$. The derivatives are denoted by  by
$\pd_k=\pd_{x_k}$ and we set, as usual,
$\pa=(\pd_1+i\pd_2)/2,\; \pd=(\pd_1-i\pd_2)/2.$

The constant magnetic field is denoted by  $\Bn>0$.
The corresponding magnetic potential is
$A^{\!\circ}(x)=(A_1^\circ,A_2^\circ)=\frac{\Bn}{2}(-x_2,x_1)$.
Then the (unperturbed) magnetic Schr\"odinger operator
in $L_2(\R^2)$ is
\begin{equation}\label{2:Schr.0}
    \HF^\circ=-(\nabla+iA^{\!\circ})^2.
\end{equation}
  The Pauli operator describing the motion of a
spin-${\frac12}$ particle acts in the space of
two-component vector functions, it has the diagonal
form, $\PF^\circ=\diag(\PF^\circ_+,\PF^\circ_-)$,
where $\PF^\circ_\pm=\HF^\circ \pm \Bn$.

The spectrum of the Schr\"odinger operator is
described by the  classical construction originating
in \cite{Fock}.
  For the  complex magnetic potential
$\Ac^\circ=A_1^\circ+iA_2^\circ$ the \emph{creation
and annihilation operators} are introduced,
\begin{equation}\label{2:Creation.0}
   \QFc=-2i\pd-\overline{\Ac^\circ},\; \Q^\circ=-2i\pa-{\Ac^\circ}.
\end{equation}
These operators can be also expressed by means of the
scalar potential, the function
$
\Psi^\circ(z)=\frac{\Bn}4|z|^2,
 $ solving
the equation $\D\Psi^\circ=\Bn$:
\begin{equation}\label{2:Creation.01}
\nonumber \Q^\circ=-2i e^{-\Psi^\circ}\pa
e^{\Psi^\circ},\;
 \QFc=-2i e^{\Psi^\circ}\pd
e^{-\Psi^\circ}.
\end{equation}

  The operators   $\QFc ,\Q^\circ $ satisfy the following
basic relations
\begin{equation}\label{2:CommRel.0.1}
[\Q^\circ,\QFc]=2\Bn.
\end{equation}
\begin{equation}\label{2:CommRel.0.2}
\PF^\circ_+=\Q^\circ\QFc, \PF^\circ_-=\QFc\Q^\circ,
\HF^\circ=\Q^\circ\QFc-\Bn=\QFc\Q^\circ+\Bn.
\end{equation}
The spectrum of  $\HF^\circ$ is described in the
following way. The equation $\PF^\circ_-u=0$, $u\in
L_2$ is equivalent to ${\Q^\circ}u=e^{-\Psi^\circ}\pa(
e^{\Psi^\circ}u)=0.$

This means that  $f=e^{\Psi^\circ}u$ is an entire
analytical  function  in $\C$, such that after being
multiplied  by $e^{-\Psi^\circ}$ it belongs to $L_2$.
The space of such  functions $f$ is called Fock or
Segal-Bargmann space $\Fc$.

  So, the null
subspace of the operator $\PF^\circ_-$, i.e., its
spectral subspace corresponding to the eigenvalue
$\L_0=0$, is $\Lc_0=e^{-\Psi^\circ}\Fc$.  After this,
by  the commutation relations \eqref{2:CommRel.0.1},
\eqref{2:CommRel.0.2},  $\Lc_q=\QFc\Lc_0$ are the
spectral subspace of $\PF^\circ_-$ with eigenvalues
$\L_q=2q\Bn$, $q=0,1,\dots$, called \emph{Landau
levels}, and the spectra of $\HF^\circ$, $\PF^\circ_+$
consist, respectively, of $\L_q+\Bn$ and $\L_q+2\Bn$.
The operators $\QFc, \Q^\circ$ act between
\emph{Landau subspaces} $\Lc_q=\QFc^q \Lc_0,
q=0,1,\dots$,
 \begin{equation}\label{2:CrAnn}\QFc:
\Lc_q\mapsto \Lc_{q+1},\; \Q^\circ :\Lc_q\mapsto
\Lc_{q-1},\; \Q^\circ:\Lc_0\mapsto 0,\end{equation}
 and are, up to
constant factors, isometries of Landau subspaces.

The spectral projection $P_q^\circ$ of $\PF^\circ_-$
corresponding to the eigenvalue $\L_q=2q\Bn$ can be
thus expressed as
\begin{equation}\label{2:Projections}
    P_q^\circ=C_q^{-1}\QFc^q P_0^\circ {\Q^\circ}^q,
    \; C_q=q!(2\Bn)^q, \ q=0,1,\dots .
\end{equation}

\subsection{The perturbed operator}

We introduce the convenient  class of functions. A
function $F(x)$ is said to belong to the class $S_\b$,
 $\b<0,$ if \begin{equation}\label{2:MagFPert1}
    F(x)=O(|x|^{\b}), \ \partial_x^k
    F(x)=O(|x|^{\b-\d}), \ |x| \to\infty,\
    k=1,2,\dots,
\end{equation}
for some $\d\in(0,-\b).$ The particular value of $\d$
is irrelevant. So, if $F\in S_\b$ then $\partial_j F$
can be considered as a function in
$S_{\b-\frac{\d}{2}}$ (with $\frac{\d}{2}$ acting as
$\d$).

  Now we introduce the perturbation $\bb\in
C^\infty(\R^2)$ of the magnetic field and  set
$\Bb=\Bn+\bb$.  We suppose that
\begin{equation}\label{2:MagFPert2}
    \bb\in S_\b {\rm \  for \ some\  } \b<-2.
\end{equation}
 Under the condition \eqref{2:MagFPert2}, the upper
estimate for the counting function of the eigenvalues
in the clusters will be proved, as well as the
approximate representation of the spectral subspaces,
following \cite{RozTa}. For obtaining asymptotic
formulas, additional conditions will be imposed on
$\bb$ and eventually on the electric perturbation.

 Let $\psi$ be a scalar potential for the field
$\bb$, a solution of the equation $\D\psi=\bb$. Of
course, $\psi$ is defined up to a harmonic summand,
the choice of $\psi$ corresponds to the choice of
gauge. The magnetic potential
$a=(a_1,a_2)=(-\pd_2\psi,\pd_1\psi)$, $\curl a=\bb$,
is thus determined up to a gradient, and the complete
scalar and vector magnetic potentials are
\begin{equation}\label{2:potentials}
   \Psi=\Psi^{\circ}+\psi,\; A=A^{\circ}+a.
\end{equation}

We define the perturbed magnetic Schr\"odinger
operator as in \eqref{2:Schr.0}, with $A^\circ$
replaced by $A$:
  $  \HF=-(\nabla+iA)^2,$
and the components of Pauli operator as $
\PF_\pm=\HF\pm\Bb. $ It is easy to observe that the
difference between $\HF$ and $\HF^{\circ}$ contains an
operator of multiplication by $A^\circ\cdot a$, the
latter function does not decay at infinity, and thus
looks like being \emph{not} a relatively compact
perturbation of $\HF^{\circ}$. However, thanks to the
special form of this term, the perturbation is still
relatively compact if $a\to 0$ at infinity, as  was
noticed by Besch \cite{Besch}. In our case, under the
conditions imposed on $\bb$, the scalar potential
grows at most logarithmically, and the vector
potential $a$ decays as $|x|^{-1} $
 at infinity.

Now let us look  at the algebraic structure related
with the perturbed operators. The perturbed creation
and annihilation operators are defined similarly to
\eqref{2:Creation.0}: $ \Qa=-2i\pd-\overline{\Ac},\;
\Q=-2i\pa-\Ac,$ where $\Ac$ is the complex magnetic
potential, $\Ac=\Ac^\circ +(a_1+ia_2)$. The
commutation relations for $\Qa,\Q$ have the form
\begin{equation}\label{2:ComRel.1}
[\Q,\Qa]=2\Bb=2\Bn+2\bb,
\end{equation}
and $\PF_\pm$ and $\HF$ satisfy
\begin{gather}\label{2:ComRel.2}
\PF_+=\Q\Qa,\; \PF_-=\Qa\Q, \;\PF_+-\PF_-=2\Bb=2\Bn+2\bb,\\
\HF=\Q\Qa-\Bb=\Qa\Q+\Bb.\label{2:ComRel.3}
\end{gather}

The relations \eqref{2:ComRel.1}-\eqref{2:ComRel.3}
 contain variable functions on the
right-hand side and the spectra of Schr\"odinger and
 Pauli operators
 do not  determine each other any more. The only
information that one can obtain immediately, is the
description of the lowest point of the spectrum of
$\PF_-$.  Since, again, $\Qa=\Q^*$, the equation
$\PF_-u=0$ is equivalent to $\Q u=0$, or
$\pa(\exp(\Psi) u)=0$. So the function $f=u\exp\Psi$
is an entire analytical  function such that
$u=\exp(-\Psi) f\in L_2$. The space of entire
functions with this property is, obviously,
infinite-dimensional, it contains at least all
polynomials in ${z}$ variable, although it does not
necessarily coincide with the Fock space. We denote
the null-space of $\PF_-$, the space of \emph{zero
modes}, by $\Hc_0$. It is infinite-dimensional;
complex polynomials times $\exp(-\Psi)$ form a dense
set in $\Hc_0$. The lowest Landau level $\L_0$ is an
isolated point in the spectrum of $\PF_-$.

As it follows from the  relative compactness of the
perturbation, by Weyl's theorem, the essential
spectrum of the perturbed operator $\PF_-$ consists of
the same Landau levels $\L_q$, and the eigenvalues in
the gaps  may only have $\L_q$ as their limit points.
This latter fact was established much earlier by
Iwatsuka \cite{Iwats}, and the constructions in
\cite{RozTa} can  be considered as the extension of
the approach in \cite{Iwats}.

Now we add a perturbation by the electric potential.
Let $V(x)$ be a real valued function in $S_\b, \
\b<-2$. We introduce the operators
\begin{equation}\label{2:ElField}
    \HF(V)=\HF+V, \; \PF_\pm(V)=\PF_\pm+V.
\end{equation}
Since the operator of multiplication by $V$ is
relatively compact with respect to $\HF, \PF_\pm$, the
operators \eqref{2:ElField} have the same essential
spectra as the respective unperturbed ones ($\L_0$
ceases to be an isolated point of the spectrum of
$\PF_-(V)$ ). In this paper we are going to study the
distribution of the eigenvalues of $\HF(V),
\PF_\pm(V)$ near $\L_q$.
\subsection{Some resolvent and commutator estimates}
In the course of our proof we will need some
boundedness    property of the resolvent of the
operators $\PF_+, \PF_-, \HF$ and their spectral
projections. These properties, in a slightly less
general form, were established in \cite{IwaTam1},
Lemma 1.4.

We denote by $R_{\pm}(z)$ the resolvent of the
operators $\PF_{\pm}$ and by $\Pi_j$, $j=1,2,$ the
operators $\Pi_j=i\partial_j+(A_j+a_j)$.

\begin{proposition} \label{2:commutatorEstimate}
Suppose that $V,\bb$ belong to $S_\n,\ \n<-2$. Let $P$
be the spectral projection of the operator $\PF_{\pm}$
or $\HF$ corresponding to some bounded isolated piece
of the spectrum. Then for any real $l$ the following
operators are bounded: $\langle
x\rangle^{-\n+\d-l}[V,P]\langle x\rangle^l, \langle
x\rangle^{-\n+\d-l}[\bb,P]\langle x\rangle^l,$ $
\langle x\rangle^{-l} R_{\pm} \langle x\rangle^l $,
$\langle x\rangle^{-l} \Pi_{j}R_{\pm} \langle
x\rangle^l$.
\end{proposition}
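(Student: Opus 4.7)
The plan is to first establish weighted $L_2$-bounds on the resolvent $R_\pm(z)$ and on $\Pi_jR_\pm(z)$, uniform for $z$ on a small contour $\gamma$ encircling the given isolated bounded piece of the spectrum, and then transfer them to $P$ and its commutators via the Cauchy representation
\[
P=\frac{1}{2\pi i}\oint_\gamma R_\pm(z)\,dz,\qquad [W,P]=\frac{1}{2\pi i}\oint_\gamma R_\pm(z)\,[\PF_\pm,W]\,R_\pm(z)\,dz
\]
(with appropriate sign) for $W\in\{V,\bb\}$; since $\gamma$ is compact and the integrand is continuous in $z\in\gamma$, this reduces everything to uniform bounds on the integrand. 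The unweighted case $l=0$ is immediate: $\PF_\pm$ is self-adjoint, so $R_\pm(z)$ is bounded on $L_2$ for $z\in\gamma$; and from $\sum_j\|\Pi_ju\|^2=(\HF u,u)=(\PF_\pm u,u)\mp(\Bb u,u)$ with $\Bb\in L_\infty$, one obtains boundedness of $\Pi_jR_\pm(z)$.

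To promote this to arbitrary real $l$, I would induct on $|l|$ using the commutator identity
\[
\langle x\rangle^{-l}R_\pm(z)\langle x\rangle^{l}=R_\pm(z)-\langle x\rangle^{-l}R_\pm(z)\,[\PF_\pm,\langle x\rangle^{l}]\,R_\pm(z),
\]
together with the direct computation $[\PF_\pm,\langle x\rangle^l]=2i\sum_j(\partial_j\langle x\rangle^l)\Pi_j-\Delta\langle x\rangle^l$, where $|\partial_j\langle x\rangle^l|\le C\langle x\rangle^{l-1}$ and $|\Delta\langle x\rangle^l|\le C\langle x\rangle^{l-2}$. The principal-part summand factors as
\[
\langle x\rangle^{-1}\cdot\bigl(\langle x\rangle^{-(l-1)}R_\pm(z)\langle x\rangle^{l-1}\bigr)\cdot\phi_j\cdot\Pi_j R_\pm(z),
\]
where $\phi_j=(\partial_j\langle x\rangle^l)\langle x\rangle^{-(l-1)}$ is bounded in $L_\infty$; the bracket is handled by the inductive hypothesis at level $l-1$, and $\Pi_jR_\pm(z)$ by the base case. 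The subprincipal $\Delta\langle x\rangle^l$-summand is treated analogously, with $l-1$ replaced by $l-2$ and $\Pi_j$ omitted. This closes the induction for nonnegative integer $l$; negative and non-integer $l$ follow by taking adjoints and complex interpolation. The weighted estimate on $\Pi_jR_\pm(z)$ is obtained by the same scheme with $T=\Pi_jR_\pm$ in place of $R_\pm$: commuting $\langle x\rangle^l$ through $T$ generates terms already controlled by the bound on $R_\pm$ and by the inductive hypothesis for $T$ at level $l-1$.

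Finally, since $V\in S_\n$ yields $|\nabla V|,|\Delta V|\le C\langle x\rangle^{\n-\d}$, the commutator $[\PF_\pm,V]=2i\nabla V\cdot\Pi-\Delta V$ is a first-order operator with decaying coefficients. Distributing weights around the integrand for $[V,P]$, the leading term factors as
\[
\bigl(\langle x\rangle^{-\n+\d-l}R_\pm(z)\langle x\rangle^{\n-\d+l}\bigr)\cdot\bigl(\nabla V\cdot\langle x\rangle^{-\n+\d}\bigr)\cdot\bigl(\langle x\rangle^{-l}\Pi_jR_\pm(z)\langle x\rangle^{l}\bigr),
\]
in which all three factors are bounded: the outer two by Step 2 (for any real $l$), and the middle multiplication operator by the $S_\n$-assumption on $V$. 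The $\Delta V$-contribution is controlled in the same way with $\Pi_j$ replaced by the identity, integration along the compact contour preserves boundedness, and the argument applies verbatim with $V$ replaced by $\bb$ to give the $[\bb,P]$ estimate. The main obstacle I expect is the weight-exponent bookkeeping in Step 2 — in particular, verifying that $[\PF_\pm,\langle x\rangle^l]$ really loses one power of $\langle x\rangle$ on its principal part (and two on the subprincipal part), so that the induction closes with room to spare — but once that combinatorial accounting is in place, the rest is routine operator-theoretic algebra.
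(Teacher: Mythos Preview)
Your proposal is correct and is precisely the standard route to such weighted resolvent and commutator bounds; the paper itself does not spell out a proof but refers to Lemma~1.4 of \cite{IwaTam1}, where the argument (for $\d=1$, positive $l$, and the projection onto the lowest Landau level) proceeds exactly along the lines you describe: an induction on the weight exponent driven by the commutator identity for $[\PF_\pm,\langle x\rangle^l]$, followed by the Cauchy representation of $P$ and the factorisation of $[\PF_\pm,W]$ as a first-order operator with $S_{\n-\d}$ coefficients. Your extension to arbitrary real $l$ via adjoints and interpolation, and to general $\d\in(0,-\n)$, is the natural adaptation the paper alludes to with ``the proposition is proved in an analogous way.''
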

The proof in \cite{IwaTam1} is given for the class
$S_\n$ with $\d=1$, for the projection $P$
corresponding to the lowest Landau level and for
positive $l$ only. In our formulation, the proposition
is proved in an analogous way.

 Next  we establish the
estimates for eigenvalues and singular numbers of some
compact operators. The estimates will be needed
further on, in the process of proving the required
eigenvalue asymptotics.

For a compact operator $T$ we, as usual, denote by
$n(\l,T)$ the distribution function of the singular
numbers ($s$-numbers) of $T$, i.e. the quantity of $s$- numbers of $T$ that are bigger than $\l$. If the operator is
self-adjoint, the distribution functions for the
positive and negative eigenvalues of $T$ are denoted
by $n_\pm(\l,T)$. The operator $T$ can be dropped from
the notation if this does not cause misunderstanding.

\begin{proposition}\label{4a:low order} Let $V$ be a function in $S_\n$, $\n<0$.
Consider the operator $X=X_N(V)= V\PF_+^{-N}$. Then
for $N$ sufficiently large,
\begin{equation}\label{4a:low order1}
    n(\l,X)=O(\l^{\frac{2}{\n}}),\ \l\to 0.
\end{equation}
Moreover, if $K$ is a compact operator then
\begin{equation}\label{4a:low order2}
n(\l,KX),\ n(\l,XK) =o(\l^{\frac{2}{\n}}),\ \l\to 0.
\end{equation}
\end{proposition}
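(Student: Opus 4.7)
The plan is to reduce the estimate to the unperturbed Landau Hamiltonian, where the Landau-subspace decomposition gives access to Toeplitz-operator asymptotics of Raikov type, and then to recover the $o$-statement by an ideal-closure argument. Throughout, set $p_0 = 2/|\n| = -2/\n > 0$. First, I would replace $\PF_+^{-N}$ by $(\PF_+^\circ)^{-N}$: the second resolvent identity $R_+(z) - R_+^\circ(z) = R_+(z)(\PF_+^\circ - \PF_+)R_+^\circ(z)$, applied along a suitable contour representing $z^{-N}$, expresses the difference in terms of the perturbation $\PF_+^\circ - \PF_+$, whose coefficients ($-2ia\cdot\nabla$, $|a|^2$, $2a\cdot A^\circ$, $\bb$) all carry strictly decaying weights. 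By Proposition~\ref{2:commutatorEstimate}, these weights may be transplanted across $R_+$ and $\Pi_j R_+$, so the correction to $V\PF_+^{-N}$ acquires a strictly stronger decay and lies in the smaller ideal of operators with $n(\l,\cdot) = o(\l^{-p_0})$. Thus it suffices to prove \eqref{4a:low order1} for the unperturbed $X_N^\circ = V(\PF_+^\circ)^{-N}$.

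For $X_N^\circ$, I use the spectral decomposition $(\PF_+^\circ)^{-N}=\sum_{q\ge 0}(2(q+1)\Bn)^{-N}P_q^\circ$, so $s_k(X_N^\circ) = s_k(|V|(\PF_+^\circ)^{-N})$ with $|V|(\PF_+^\circ)^{-N} = \sum_q(2(q+1)\Bn)^{-N}|V|P_q^\circ$. The $s$-numbers of $|V|P_q^\circ$ coincide with the square roots of eigenvalues of the Toeplitz operator $P_q^\circ|V|^2 P_q^\circ$ with symbol $|V|^2\in S_{2\n}$, which by Raikov-type Toeplitz asymptotics on Landau subspaces satisfy $n(\l, |V|P_q^\circ) = O(\l^{2/\n})$ with constant uniform in $q$ (the asymptotic coefficient depending only on the phase-space volume of the sublevel sets of $|V|^2$). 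Splitting the series at $q\sim\l^{-1/N}$ controls the tail in operator norm by $O(\l)$, while the finite head is estimated by the sub-additivity of the counting function $n(\l_1+\l_2,A+B)\le n(\l_1,A)+n(\l_2,B)$ with a suitable weight distribution in $q$, yielding a total count $O(\l^{2/\n})$ provided $N>|\n|/2$.

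For the $o$-estimate, let $\mathfrak{S}_{p_0,\infty}^0$ denote the compact operators satisfying $n(\l,\cdot)=o(\l^{-p_0})$; this is the closure of finite-rank operators in the weak Schatten $(p_0,\infty)$-quasi-norm and forms a symmetric two-sided ideal. By the previous step, $X\in \mathfrak{S}_{p_0,\infty}$. Given a compact $K$, approximate it by finite-rank $K_n$ in operator norm; then $XK_n$ is finite-rank (hence in $\mathfrak{S}_{p_0,\infty}^0$), and by the Hölder-type inequality for weak Schatten classes, $\|X(K-K_n)\|_{p_0,\infty}\le C\|X\|_{p_0,\infty}\|K-K_n\|\to 0$. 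Consequently $XK$ lies in the closure $\mathfrak{S}_{p_0,\infty}^0$, proving $n(\l,XK) = o(\l^{2/\n})$; the argument for $KX$ is identical.

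The main technical obstacle is the first step. The perturbation $\PF_+-\PF_+^\circ$ contains the unbounded first-order term $-2ia\cdot\nabla$ with $a = O(|x|^{-1})$, which supplies only one power of decay per application. Iterating the resolvent identity and tracking the weights through the bounded weighted resolvents $\langle x\rangle^{-l}R_+\langle x\rangle^l$ and $\langle x\rangle^{-l}\Pi_j R_+\langle x\rangle^l$ of Proposition~\ref{2:commutatorEstimate} requires careful book-keeping to ensure that each correction term ultimately carries strictly more decay than $V$ alone, so that it falls into the strictly smaller ideal $\mathfrak{S}_{p_0,\infty}^0$ rather than merely $\mathfrak{S}_{p_0,\infty}$.
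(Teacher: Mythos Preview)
Your route is genuinely different from the paper's. The paper never reduces to the unperturbed Landau operator, nor does it decompose over Landau subspaces. Instead it bounds $|V|$ by $\langle x\rangle^{\n}$, invokes the diamagnetic inequality to dominate the semigroup of $\PF_+$ by that of $-\Delta$, applies the semigroup--domination results of \cite{RozDom} to majorize the singular values of $\langle x\rangle^{\n}\PF_+^{-N}$ by those of $\langle x\rangle^{\n}(1-\Delta)^{-N}$, and finishes with Cwikel's theorem \cite{Cwikel}. This dispatches the perturbed magnetic field in one stroke, with no resolvent--identity bookkeeping at all. For general $\n$ the paper then factors the weight as $\tilde V^{N+1}$, commutes copies of $\tilde V$ through $\PF_+^{-1}$ so that the operator becomes a product of factors of the form $W\PF_+^{-1/2}$ already covered by the special case, and concludes by Weyl's inequality for products. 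Your ideal--closure argument for the $o$--statement is essentially the paper's one--line ``follows from the first one and the Weyl inequality.''

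Your strategy is plausible but has two soft spots. First, the claim that $n(\l,|V|P_q^\circ)\le C\l^{2/\n}$ holds with $C$ \emph{uniform} in $q$ does not follow from Raikov--type asymptotics: those give a $q$--independent leading \emph{coefficient}, not a uniform upper bound valid for all $\l$. You would need a separate argument (even polynomial growth of the constant in $q$ would suffice, at the price of enlarging $N$), and the head/tail split at $q\sim\l^{-1/N}$ does not help here, since equal splitting of $\l$ over $M(\l)$ terms introduces a divergent factor $M^{-2/\n}$. Second, your Step~1 is only a sketch. After one resolvent identity the correction still contains $R_+$, and the telescoped difference $\PF_+^{-N}-(\PF_+^\circ)^{-N}=\sum_j \PF_+^{-j}(\PF_+^\circ-\PF_+)(\PF_+^\circ)^{-(N-j+1)}$ has terms (for $j$ near $N$) with only one unperturbed resolvent, too few to feed into your Step~2. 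A cleaner reorganisation is to write $\PF_+^{-N}=(\PF_+^\circ)^{-N}(\1+K)$ with $K=(\PF_+^\circ)^N\PF_+^{-N}-\1$ compact (by relative compactness of the magnetic perturbation), so that $X-X_N^\circ=X_N^\circ K$ and your own Step~3 closes the loop; but that is not the argument you outlined.
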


Compared with  Proposition \ref{5:EstimateAboveL}
below, the above estimate shows that for $N$ large
enough, the operator $X_N(V)$ admits the same spectral
estimates as the Toeplitz type operator, with
$\PF_+^{-N}$ replaced by the spectral projection of
$\PF_\pm$.
\begin{proof}
Consider a very special case first. Let $V\in S_\n,
\n\in(-\frac12,0)$ and $N\in(-\n,\frac12]$. We will
obtain the $s$-numbers estimates of the operator $V
\PF_+^{-N}$. These numbers are majorated by the
s-numbers of the analogous  operator with $V$ replaced
by $V_\n= \langle x\rangle^{\n/2}$. The semigroup generated by $\PF_-{-N}$, by the diamagnetic inequality, is dominated by the semigroup generated by $(1-\Delta)^{\frac{N}2}$, therefore,
by the results of
\cite{RozDom} (see Theorems 1, 4 and Sect.5.4 there),
 these singular
numbers are majorated by  the $s$-numbers of the
operator $T_\n=V_\n (1-\Delta)^{-\frac{N}2}$. By the
Fourier transform, this operator is unitary equivalent
to $(1-\Delta)^{\n}\langle\xi\rangle^{-\frac{N}2}$,
and for the latter operator the required eigenvalue
estimate is given by Cwikel's theorem in
\cite{Cwikel}.

For the general case, again for $V=V_\n$, $\n<0$, we
take  $N$ so large that $-\n/N<\frac14$. We represent
$V$ in the form $V=\tilde{V}^{N+1}$,
$\tilde{V}=V^{\frac{1}{N+1}}$. Now in the expression
$\tilde{V}^{N+1}\PF_+^{-N}$ we leave one copy of
$\tilde{V}$ in the first (utmost left) position and
start moving the remaining copies to the right,
commuting them  with copies of $\PF_+^{-1}$ in such
way that finally there will be only one entry of
$\tilde{V}$ or its derivatives between two copies of
$\PF_+^{-1}$. As a result, we arrive to a  collection
of summands, each being the product of operators of
the form  $W\PF_+^{-\frac12}$, $\PF_+^{-\frac12}W$,
and, possibly, some more bounded operators of the form
considered in Proposition \ref{2:commutatorEstimate},
where $W\in S_{\n/N}$ is $\tilde{V}$ or some of its
derivatives. For the operators $W\PF_+^{-\frac12}$,
$\PF_+^{-\frac12}W$ we can apply the estimate found in
the first part of the proof and obtain the required
inequality using the Weyl inequality for the s-numbers
of the product of operators. The second statement
follows from the first one and, again, the Weyl
inequality.
\end{proof}

\section{Approximate spectral subspaces}
\label{Subspaces3} In this section, under the
condition that  $\bb$ satisfies \eqref{2:MagFPert2},
we construct the approximate spectral subspaces of the
operators \eqref{2:ElField}. This is done in the same
way as in \cite{RozTa}, so we just briefly describe
the construction and
 pinpoint  the
main differences.

First of all, we consider the   null subspace $\Hc_0$.
In \cite{RozTa} it is shown that for $\bb\in
C_0^{\infty}$, $\Hc_0$ possesses a dense subspace of
rapidly decaying functions. The same reasoning proves
this property  for our case. It is here, that the
condition $\b<-2$ implying that $V,\bb\in L_1$ is
essential.

Let $\d_q=(\L_q-\g,\L_q+\g), q=0,1,2,\dots$, $\g<\Bn$,
be intervals of the same size on the real axis,
centered at the Landau levels $\L_q=2q\Bn$. We choose
the size of $\d_q$ in such way that  neither of these
intervals has the eigenvalues of $\PF_-$ at its
endpoints. Moreover, since the lowest LL $\L_0=0$ is
an isolated point of the spectrum of $\PF_-$, we can
choose the size of the intervals in such way that
$\d_0$ contains only this point of spectrum.  We
denote by $\Hc_q$ the spectral subspace of $\PF_-$
corresponding to the interval $\d_q$ and by $P_q$ the
corresponding spectral projection. Since, by
\cite{Iwats}, the spectrum of $\PF_-$ is discrete
between Landau levels, the change of $\d_q$ leads only
to a finite-rank perturbation of $P_q$. As usual, the
spectral projection $P_q$ can be be expressed by means
of the integration of the resolvent of $\PF_-$ along a
closed contour $\G_q$ in the complex plane, not
passing through the eigenvalues of $\PF_-$ and
containing inside only those eigenvalues that lie in
$\d_q$. Again, using the discreteness of the spectrum
of $\PF_-$ between the Landau levels, we can choose
these contours so that  they are obtained from $\G_0$
by the shift along the real axis in the complex
plane, $\G_q=\G_0+2q\Bn$.

Now we are going to establish several properties of
the subspaces $\Hc_q$, projections $P_q$ and some
related operators. First, note the simple fact
following directly from the spectral theorem.

\begin{proposition}\label{2:PropUnpertSubspace}For any
$q=0,1,\dots,$ and any polynomial $p(\l)$ the operator
$p(\PF_-) P_q$ is bounded, moreover
$(p(\PF_-)-p(\L_q))P_q$ is compact.
\end{proposition}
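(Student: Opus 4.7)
The plan is to use the functional calculus for the self-adjoint operator $\PF_-$ together with the spectral structure of $\PF_-$ on the interval $\d_q$. Since $P_q$ is itself the spectral projection of $\PF_-$ for $\d_q$, it commutes with every Borel function of $\PF_-$; in particular $p(\PF_-)P_q = P_q p(\PF_-) P_q$, and this operator is realized on the invariant subspace $\Hc_q = \Ran P_q$ as the same polynomial in the restriction $\PF_-|_{\Hc_q}$. The spectrum of $\PF_-|_{\Hc_q}$ is contained in the bounded closed set $\overline{\d_q}$, hence $\|p(\PF_-)P_q\| \le \sup_{\l \in \overline{\d_q}} |p(\l)|$, which settles the boundedness claim.

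For the compactness assertion, I would write $p(\l) - p(\L_q) = (\l-\L_q)\, r(\l)$ for the polynomial $r$ obtained by polynomial division. The functional calculus then yields
\begin{equation*}
(p(\PF_-)-p(\L_q))\,P_q \;=\; r(\PF_-)\,(\PF_- - \L_q)\,P_q,
\end{equation*}
and $r(\PF_-)P_q$ is bounded by the first part of the argument (applied to $r$ in place of $p$). Hence it is enough to show that $T := (\PF_- - \L_q)P_q$ is compact. The operator $T$ is bounded, self-adjoint, commutes with $P_q$, and leaves $\Hc_q$ invariant; its restriction $T|_{\Hc_q}$ is simply $\PF_-|_{\Hc_q} - \L_q$. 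By Iwatsuka's theorem cited earlier the essential spectrum of $\PF_-$ is exactly the union of the Landau levels, and by the choice of $\d_q$ (endpoints not in the spectrum, $\L_q$ at the centre) the only essential-spectrum point of $\PF_-$ lying in $\d_q$ is $\L_q$. Therefore every element of $\sigma(\PF_-) \cap \d_q$ distinct from $\L_q$ is a discrete eigenvalue of finite multiplicity, and $\L_q$ is the only possible accumulation point. It follows that $\sigma(T|_{\Hc_q}) \setminus \{0\}$ is a discrete set of finite-multiplicity eigenvalues with $0$ as the only accumulation point, which for a bounded self-adjoint operator is exactly compactness. Hence $T$, and therefore $(p(\PF_-)-p(\L_q))P_q$, is compact.

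There is no genuine obstacle in the argument; everything is a direct corollary of the spectral theorem once the structure of $\sigma(\PF_-) \cap \d_q$ has been pinned down. The one subtlety worth pointing out in the write-up is that $\L_q$ may itself be an eigenvalue of $\PF_-$ of infinite multiplicity (inherited from the Landau subspace through the essential spectrum), but the corresponding eigenspace lies in $\ker (T|_{\Hc_q})$ and therefore does not spoil compactness.
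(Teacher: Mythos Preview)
Your argument is correct and is essentially the same as the paper's: both use the spectral theorem to bound $p(\PF_-)P_q$ via $\sup_{\l\in\overline{\d_q}}|p(\l)|$, and both invoke Iwatsuka's theorem to conclude that the only accumulation point of $\sigma(\PF_-)\cap\d_q$ is $\L_q$, whence $(p(\PF_-)-p(\L_q))P_q$ is compact. Your write-up is simply a more explicit rendering (with the factorization $p(\l)-p(\L_q)=(\l-\L_q)r(\l)$) of the paper's two-sentence justification.
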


In fact, by the spectral theorem the nonzero spectrum
of the operator $p(\PF_-)P_q$ consists of the points
$p(\l_j)$ where $\l_j$ are all points of spectrum of
$\PF_-$ in $\d_q$ and thus  all $p(\l_j)$  live in a
bounded interval. Moreover, $p(\l_j)$ may only have
$p(\L_q)$ as their limit point by Iwatsuka's theorem.

The following lemma  will enable us later to prove a
much stronger compactness property.
\begin{lemma}\label{2:LemmaBound} Let each of
 $T_j,\; j=1,\dots,N$ be one of operators $\Q$ or
$\Qa$. Then for some constants $C, C'$, for any $u$ in
the domain of the operator $\PF_-^N$,
\begin{equation}\label{2:LemmaBound.1}
   \|T_1T_2\dots T_N u\|^2\le C(\PF_-^Nu,u)+C'\|u\|^2.
\end{equation}
\end{lemma}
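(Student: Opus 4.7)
The plan is to prove the lemma by induction on $N$. The base case $N=1$ is immediate from \eqref{2:ComRel.2}: $\|\Q u\|^2 = (\Qa\Q u, u) = (\PF_- u, u)$ and $\|\Qa u\|^2 = (\Q\Qa u, u) = (\PF_- u, u) + 2(\Bb u, u) \le (\PF_- u, u) + 2\|\Bb\|_\infty \|u\|^2$.

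The structural tool driving the induction is the pair of intertwining identities
\[
\Q \PF_- = \PF_+ \Q, \qquad \Qa \PF_+ = \PF_- \Qa,
\]
which follow by associativity from $\PF_- = \Qa\Q$ and $\PF_+ = \Q\Qa$. Iterating them one obtains $\Q\PF_-^{k} = \PF_+^{k} \Q$ and $\Qa \PF_+^{k} = \PF_-^{k} \Qa$ for every integer $k \ge 0$, and as an immediate consequence the clean formulae $\Qa \PF_+^{N-1} \Q = \PF_-^{N}$ and $\Q \PF_-^{N-1} \Qa = \PF_+^{N}$.

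For the inductive step, given a word $W = T_1 T_2 \cdots T_{N-1} T_N$ of length $N$, I would set $v = T_N u$ and apply the inductive hypothesis to the length-$(N-1)$ word $T_1 \cdots T_{N-1}$ to obtain
\[
\|W u\|^2 \le C\bigl((\PF_-^{N-1} T_N u, T_N u) + \|T_N u\|^2\bigr).
\]
The summand $\|T_N u\|^2$ is $\le C((\PF_- u, u) + \|u\|^2)$ by the base case, hence $\le C'((\PF_-^{N} u, u) + \|u\|^2)$ via the elementary spectral interpolation $\PF_-^j \le \ve \PF_-^N + C_\ve I$, valid for $0 \le j < N$ since $\PF_- \ge 0$. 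The other summand equals $(T_N^{*} \PF_-^{N-1} T_N u, u)$, so the entire argument is reduced to the quadratic-form inequality $T_N^{*} \PF_-^{N-1} T_N \le C\PF_-^{N} + C' I$ for each choice $T_N \in \{\Q, \Qa\}$.

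In the case $T_N = \Qa$ the second intertwining identity gives directly $\Q \PF_-^{N-1} \Qa = \PF_+^{N}$; in the case $T_N = \Q$ one writes $\Qa \PF_-^{N-1} \Q = \PF_-^{N} + \Qa(\PF_-^{N-1} - \PF_+^{N-1})\Q$ using the first identity. Expanding $\PF_+ = \PF_- + 2\Bb$ non-commutatively, the remaining corrections (including $\PF_+^{N} - \PF_-^{N}$ in the first subcase) are finite sums of operators of the form $\PF_-^{a_1} \Bb \PF_-^{a_2} \Bb \cdots \Bb \PF_-^{a_r}$ with $\sum_i a_i \le N-1$ and $r \ge 1$, sandwiched between $\Qa$ and $\Q$ where necessary. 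The main technical obstacle is the estimation of these error terms: writing $(\Qa F \Q u, u) = (F \Q u, \Q u)$ and using the boundedness of $\Bb$, the boundedness of the commutators $[\Q,\Bb] = -2i\pa\Bb$ and $[\Qa,\Bb] = -2i\pd\Bb$ (which holds because $\bb \in S_\b$ forces its derivatives to decay at infinity), repeated Cauchy--Schwarz to strip off the $\Bb$ factors, and the same spectral interpolation to redistribute the remaining powers of $\PF_-$, each error is majorated by $\ve (\PF_-^N u, u) + C_\ve \|u\|^2$. Choosing $\ve$ small enough to absorb the error into the left-hand side closes the induction.
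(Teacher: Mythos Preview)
The paper itself does not give a proof of this lemma; it defers to \cite{RozTa}, Section~7, remarking only that the argument there for compactly supported $\bb$ carries over verbatim. So a line-by-line comparison is not possible from this paper alone. Your inductive strategy via the intertwining identities $\Q\PF_-=\PF_+\Q$, $\Qa\PF_+=\PF_-\Qa$ is natural and is very much in the spirit of how such statements are proved.

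There is, however, one point where your sketch is too quick and where a direct reading of what you wrote would fail. After expanding $\PF_+^{N}-\PF_-^{N}$ (or the analogous difference at level $N-1$) you obtain summands such as $\PF_-^{\,N-1}\Bb$, whose quadratic form is $(\Bb u,\PF_-^{\,N-1}u)$. A straight Cauchy--Schwarz gives $\|\Bb\|_\infty\,\|u\|\,\|\PF_-^{\,N-1}u\|$, and for $N\ge 3$ this quantity is \emph{not} controlled by $\ve(\PF_-^{\,N}u,u)+C_\ve\|u\|^2$: take the spectral measure $\mu_u=M\delta_0+\delta_t$ with $t=\sqrt{M+1}$ and let $M\to\infty$. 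So ``Cauchy--Schwarz to strip off the $\Bb$ factors'' does not work on these extreme terms as stated.

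What does work is to first move one power of $\PF_-$ across, writing $(\PF_-^{\,N-1}u,\Bb u)=(\PF_-^{\,N-2}u,\PF_-(\Bb u))=(\PF_-^{\,N-2}u,\Bb\PF_- u)+(\PF_-^{\,N-2}u,[\PF_-,\Bb]u)$; the commutator $[\PF_-,\Bb]=-2i\bigl(\Qa(\pa\Bb)+(\pd\Bb)\Q\bigr)$ is first order with bounded coefficients, and after this rebalancing the Cauchy--Schwarz and interpolation steps go through. Iterating this commutation until the $\PF_-$ powers on the two sides differ by at most one is exactly what is needed. You do list the bounded commutators $[\Q,\Bb]$, $[\Qa,\Bb]$ among your tools, so perhaps this is what you intended; but the order of operations---commute to balance, \emph{then} Cauchy--Schwarz---is the essential mechanism, and your write-up should make that explicit.
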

The proof of Lemma~\ref{2:LemmaBound} for the case
$\bb\in C_0^\infty$ can be found in \cite{RozTa}
Section 7, in our case the proof goes exactly in the
same way.

Now we can establish the compactness property.
\begin{proposition}\label{2:PropComp} Let
$T_1,\dots,T_N$ be a collection of operators, each
being $\Qa$ or $\Q$, and let $h_j$, $j=0,\dots N$ be
functions with all derivatives bounded, $\Tc=h_0T_1h_1
\dots T_N h_N$.
 Then for any
$q$ and for any polynomial $p(\l)$ the operators
$\Tc(p(\PF_-)-p(\L_q))P_q \Tc$
 are compact.\end{proposition}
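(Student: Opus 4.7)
The plan is to reduce the compactness of $\Tc(p(\PF_-)-p(\L_q))P_q\Tc$ to that of the already-established compact operator $(p(\PF_-)-p(\L_q))P_q$ from Proposition~\ref{2:PropUnpertSubspace}. Writing $A=p(\PF_-)-p(\L_q)$ and using that $P_q$ commutes with any polynomial of $\PF_-$ together with $P_q^2=P_q$, I will manipulate the product into the form
\begin{equation*}
\Tc A P_q \Tc \;=\; (\Tc P_q)\,(AP_q)\,(P_q \Tc).
\end{equation*}
Since $AP_q$ is compact, compactness of the full product will follow from the boundedness on $L_2(\R^2)$ of the two flanking factors $\Tc P_q$ and $P_q\Tc$.

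For the boundedness of $\Tc P_q$ I proceed by a commutator reduction on $\Tc=h_0 T_1 h_1 T_2\cdots T_N h_N$. Each $T_j$ is a first-order differential operator, so every commutator $[T_j,h]$ is the multiplication by $-2i\pa h$ or $-2i\pd h$, which is bounded by hypothesis on the $h_j$. Iteratively pushing each $h_j$ to the leftmost slot expresses $\Tc$ as a finite sum
\begin{equation*}
\Tc \;=\; \sum_{\sigma}\, F_\sigma\cdot T_{\sigma(1)}T_{\sigma(2)}\cdots T_{\sigma(k_\sigma)},
\end{equation*}
where $\sigma$ runs over order-preserving subsequences of $(1,\dots,N)$, each $F_\sigma$ is a bounded function, and $k_\sigma\le N$. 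For every such term Lemma~\ref{2:LemmaBound} yields
\begin{equation*}
\|T_{\sigma(1)}\cdots T_{\sigma(k_\sigma)} P_q v\|^2 \le C(\PF_-^{k_\sigma} P_q v, P_q v)+C'\|P_q v\|^2,
\end{equation*}
and since $P_q$ projects onto $\Hc_q$ where the spectrum of $\PF_-$ is confined to the bounded set $\d_q$, the operator $\PF_-^{k_\sigma}P_q$ is bounded. Summing finitely many such bounds gives $\|\Tc P_q v\|\le C\|v\|$. The boundedness of $P_q\Tc$ is obtained by applying the same argument to the formal adjoint $\Tc^*$, which has the same structural form (with the roles of $\Q$ and $\Qa$ swapped and $h_j$ replaced by $\bar h_j$), and then dualizing.

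Once both $\Tc P_q$ and $P_q\Tc$ are bounded, the displayed factorization presents $\Tc A P_q \Tc$ as bounded$\,\cdot\,$compact$\,\cdot\,$bounded, hence compact. The main technical obstacle I anticipate is the bookkeeping of the commutator expansion: one must verify that the collapse produces only finitely many terms, each of which is a bounded function times an ordered product of at most $N$ of the operators $\Q,\Qa$, precisely the shape to which Lemma~\ref{2:LemmaBound} applies. The algebraic identity $\Tc A P_q\Tc = (\Tc P_q)(AP_q)(P_q\Tc)$ is checked on the dense invariant domain of $\Tc$ (rapidly decaying smooth functions, which lie in the domain of every power of $\PF_-$) using $[A,P_q]=0$ and $P_q^2=P_q$, and then extends by continuity to all of $L_2(\R^2)$.
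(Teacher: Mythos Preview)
Your proof is correct and follows essentially the same approach as the paper's own argument: both reduce $\Tc$ by commuting the multiplication operators $h_j$ past the $T_j$'s to obtain a finite sum of terms (bounded function)$\times$(ordered product of $\Q,\Qa$), then invoke Lemma~\ref{2:LemmaBound} for the flanking factors and Proposition~\ref{2:PropUnpertSubspace} for the compact core. The only cosmetic difference is that the paper inserts $(\PF_-^N+1)^{-1}(\PF_-^N+1)$ on each side and bounds $\Tc_\kappa(\PF_-^N+1)^{-1}$ via Lemma~\ref{2:LemmaBound}, whereas you exploit $P_q^2=P_q$ and $[A,P_q]=0$ to insert an extra copy of $P_q$ on each side and apply Lemma~\ref{2:LemmaBound} directly with $u=P_qv$; both devices achieve the same end.
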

\begin{proof}
By commuting functions $h_j$ and operators $T_j$
(moving all functions to the left), we transform
 the left operator $\Tc$ to the sum of
terms of the form $\tilde{h}_\k \Tc_\k$ where
$\tilde{h}_\k$ are bounded functions and $\Tc_\k$ is a
product of no more than $N$ operators $\Qa, \Q$.
Similarly, in $\Tc$ that stands to the right of $P_q$,
we move all functions to the utmost right positions,
to get the representation of $\Tc$ as a sum of terms
$\Tc_\varkappa\tilde{h}_\varkappa$, being the product
of a bounded smooth function $\tilde{h}_\varkappa$ and
no more than $N$ operators $\Qa, \Q$.

For each of the terms arising in this way in the
decomposition of $\Tc(p(\PF_-)-p(\L_q))P_q \Tc$, we
can write
\begin{gather}\label{2:ProComp.1}
\tilde{h}_\k\Tc_\k (p(\PF_-)-p(\L_q))P_q
\Tc_\varkappa\tilde{h}_\varkappa = \bl\tilde{h}_\k
\Tc_\k(\PF_-^N+1)^{-1}\br\times \\\notag\bl(\PF_-^N+1)
(p(\PF_-)-p(\L_q))P_q(\PF_-^N+1)\br
\bl(\PF_-^N+1)^{-1}\Tc_\varkappa\tilde{h}_\varkappa\br.
\end{gather}

In \eqref{2:ProComp.1}, the first factor in brackets
is bounded by Lemma~\ref{2:LemmaBound}, and the middle
factor is compact by
Proposition~\ref{2:PropUnpertSubspace}. The last
factor in brackets is also bounded,
 by Lemma~\ref{2:LemmaBound} applied to the adjoint operator.
  \end{proof}

Now we  describe the main construction of the paper,
the approximate spectral subspaces of the perturbed
operator. It is sufficient to consider the operator
$\PF_-(V)$. In fact, by \eqref{2:ComRel.1},
\eqref{2:ComRel.2}, \eqref{2:ComRel.3},
\begin{equation}\label{3:DifferentOperators}\HF(V)=\PF_-(V+\bb)+\Bn,
\PF_+(V)=\PF_-(V+2\bb)+2\Bn,\end{equation}
 and thus
these operators differ from $\PF_-(V)$ by a shift and
by the electric type perturbations $\bb$, $2\bb$. We
find the approximate spectral subspaces of $\PF_-$.
Adding an electric perturbation will then be an easier
task.

The subspaces approximating $\Hc_q$ will be defined
as:
\begin{equation}\label{3:Subspaces}
    \Gc_0 =\Hc_0,\; \Gc_q=\Qa^q\Gc_0,\; q=1,2,\dots.
\end{equation}
So we mimic  the construction of the eigenspaces of
the unperturbed Landau Hamiltonian, see
\eqref{2:CrAnn}, in the same way as it was done in
\cite{RozTa}, by applying the creation operators to
the space of zero modes.

Of course, since we apply the unbounded  operator
$\Qa$, we must show that we never leave the space
$L_2$,
 and moreover,  that the subspaces
$\Gc_q$ are closed. Both these properties, as well as
some other results  will be based upon the
 important Proposition~\ref{3:basicPropositionBP} (an analogy of Proposition
3.4 in \cite{RozTa}).

The essential difference is that now, when the
perturbation does not have compact support, we have to
trace the rate of decay of different terms arising in
the process of transformations. This analysis enables
us to single out the leading terms in the resulting
expansions.

\begin{proposition}\label{3:basicPropositionBP}
Let  $q>0$.\begin{enumerate}
 \item There exists a
function $\Zc_q[\bb]\in S_\b$ depending only on $q$,
$\Bn$, and $\bb$ such that for any $u\in\Hc_0$,
\begin{equation}\label{3:BP:equation}
    \|\Qa^q u\|^2=C_q \|u\|^2+(\Zc_q[\bb] u,u),\; C_q=q!(2\Bn)^q.
\end{equation}
The function $\Zc_q[\bb]$ is  a polynomial in $\bb$
and its derivatives up to the order $2q-2$ with
coefficients depending on $\Bn$.  The  term linear in
$\bb$ and not containing derivatives equals
$C'_q\Bn^{q-1}\bb$, $C'_q=2^q q! q$. Moreover,
\begin{equation}\label{3:BP:equationSlow}
\Zc_q[\bb]-C'_q\Bn^{q-1}\bb=O(|x|^{\b-\d})
\end{equation}
at infinity.
 \item Let $U(x)$ be a function in $S_\b, \b<-2$.
  There exists a function $\Xc_q[\bb,U]\in S_\b$ depending
 only on $q$, $\Bn$,  $\bb$, and $U$ such that for any
$u\in\Hc_0$.
\begin{equation}\label{3:BP:equation.V}
  (U\Qa^q u,\Qa^q u)= (\Xc_q[\bb,U] u,u).
\end{equation}
The function $\Xc_q[\bb,U]$ is expressible as an order
$2q$ linear differential operator acting on $U$, with
coefficients depending polynomially on $\bb,$ its
derivatives, and $\Bn$, moreover,
\begin{equation}\label{3:BP:equation.V.Slow}
\Xc_q[\bb,U]-C_q'\Bn^q U=O(|x|^{\b-\d})
\end{equation}
at infinity.
\end{enumerate}
\end{proposition}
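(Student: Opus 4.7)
The plan is to mirror the proof of Proposition 3.4 in \cite{RozTa}, where the analogous identities are established for compactly supported $\bb$; the only genuinely new task here is tracking the decay at infinity of the various summands produced by the commutator calculus, so as to isolate the stated leading parts.

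\emph{Algebraic reduction.} Since $\Qa=\Q^*$ and $\Q u=0$ on $\Hc_0$, one has $\|\Qa^q u\|^2=(u,\Q^q\Qa^q u)$. The three commutation identities
\[
[\Q,\Qa]=B:=2\Bn+2\bb,\qquad [\Q,F]=-2i\,\pa F,\qquad [\Qa,F]=-2i\,\pd F
\]
(for any multiplication operator $F$), iteratively applied, put $\Q^q\Qa^q$ into an ordered form $\sum_\alpha F_\alpha(x)\Qa^{k_\alpha}\Q^{l_\alpha}$ where each $F_\alpha$ is a polynomial in $B$ and its $\pa,\pd$-derivatives of total order at most $2q-2$. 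On $u\in\Hc_0$ the $l_\alpha\ge 1$ terms drop out, and the residual sum is collapsed via the auxiliary identity
\[
(u,F\Qa^k u)=(2i)^k\!\int(\pd^k F)|u|^2\,dx,
\]
which follows from the same commutator algebra together with $\Q^k(Fu)=(-2i)^k(\pa^k F)u$ on $\Hc_0$. The outcome is $\|\Qa^q u\|^2=(Z_q[\bb]u,u)$ with $Z_q[\bb]$ a scalar multiplier, polynomial in $\bb$ and its derivatives of order $\le 2q-2$, coefficients polynomial in $\Bn$.

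\emph{Identification of leading parts; Part (2).} Setting $\bb\equiv 0$ collapses $B$ to the constant $2\Bn$ and kills all derivative corrections; the standard harmonic-oscillator algebra then yields $\Q^q\Qa^q u=q!(2\Bn)^q u$ on $\Hc_0$, so $Z_q[0]=C_q$, the constant in \eqref{3:BP:equation}. The derivative-free, $\bb$-linear part of $Z_q[\bb]$ arises only from linearizing $q!B^q=q!(2\Bn+2\bb)^q$, producing $q\cdot q!(2\Bn)^{q-1}(2\bb)=C'_q\Bn^{q-1}\bb$. Every other summand of $\Zc_q[\bb]:=Z_q[\bb]-C_q$ either contains $\bb^m$ with $m\ge 2$ (so is $O(|x|^{m\b})=O(|x|^{\b-\d})$ for $\d=(m-1)|\b|>0$) or carries at least one derivative of $\bb$ (hence lies in $S_{\b-\d}$ by the definition of $S_\b$), establishing \eqref{3:BP:equationSlow}. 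For Part (2) the same scheme applies to $(U\Qa^q u,\Qa^q u)=(u,\Q^q U\Qa^q u)$. Using
\[
\Q^q U=\sum_{j=0}^q\binom{q}{j}(-2i)^j(\pa^j U)\,\Q^{q-j}
\]
and the ordered form of $\Q^{q-j}\Qa^q$, one reduces to $(\Xc_q[\bb,U]u,u)$, with $\Xc_q[\bb,U]$ a linear differential operator of order $2q$ in $U$, coefficients polynomial in $\bb$, its derivatives, and $\Bn$. The derivative-free, $\bb\equiv 0$ part yields the stated leading coefficient $C'_q\Bn^q$ multiplying $U$ in \eqref{3:BP:equation.V.Slow}; every remaining summand contains either a derivative of $U$ (reducing $S_\b$ to $S_{\b-\d}$) or a nontrivial factor of $\bb$ or its derivatives, hence lies in $S_{\b-\d}$.

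\emph{Principal obstacle.} The commutator algebra is purely routine and identical in structure to \cite{RozTa}. The novelty, and the only place requiring care, is the decay classification: one must verify, using the Leibniz-type expansions of $[\Qa^k,F]$ and $[\Q^k,F]$, that every monomial produced either matches the claimed leading structure or automatically belongs to $S_{\b-\d}$ (either by being quadratic or higher in $\bb$ or $U$, or by carrying at least one derivative). This bookkeeping is straightforward but unavoidable; no analytic difficulty arises beyond it.
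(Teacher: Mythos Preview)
Your proposal is correct and follows essentially the same route as the paper: both defer the commutator combinatorics to \cite{RozTa} and then isolate the decay estimates \eqref{3:BP:equationSlow}, \eqref{3:BP:equation.V.Slow} by observing that every term other than the leading one carries either a derivative of $\bb$ (or $U$) or a product of at least two such factors, hence lies in $S_{\b-\d}$. You have in fact written out more of the algebra than the paper does (the ordered form of $\Q^q\Qa^q$, the auxiliary reduction of $(u,F\Qa^k u)$, and the linearization of $q!B^q$), but the strategy and the key observation are identical.
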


\begin{proof} The combinatorial part of the proof, consisting of multiple commuting
 of the creation and annihilation operators with
 functions and with each other is exactly the same as
 in \cite{RozTa}. What remains  to be checked are the estimates
 \eqref{3:BP:equationSlow} and
 \eqref{3:BP:equation.V.Slow}. These estimates follow
 from the fact that all terms in the expressions $\Zc_q[\bb],\
 \Xc_q[\bb,U]$,
except the leading ones,
 contain either derivatives of $\bb$ or $U$, or products of
 these functions, and therefore decay at infinity
 not slower than $|x|^{\b-\d}$.
\end{proof}

When applying  Proposition~\ref{3:basicPropositionBP}
and similar results, we need  a certain compactness
property. Such facts were used persistently in
\cite{RaiWar}, \cite{MelRoz}, but for the case of a
constant magnetic field only.
\begin{lemma}\label{3:LemComp}
Let $W(x)$ be a function in $S_\n, \ \n<0$. Let $\LF$
be an arbitrary  differential operator having the form
\begin{equation}\label{3:LF.Slow}
    \LF=f_1T_1f_2T_2\dots T_m,
\end{equation}
where each of $T_j$ is one of operators $\Q, \Qa$ and
$f_j$ are functions, with all derivatives bounded.
 Then the quadratic form
 \begin{equation}\label{3:Qform}\wF[u]=\int W(x)| \LF u|^2dx \end{equation}
 is compact in the space $\Hc_0$.\end{lemma}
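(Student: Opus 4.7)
The plan is to exploit that $\Q$ annihilates every $u\in\Hc_0$, rewriting $\LF u$ as a finite combination of pure powers $\Qa^p u$ with smooth bounded coefficients, and then to prove compactness of each resulting sesquilinear form on $\Hc_0$ by a Rellich argument using the uniform $L_2$--bounds on the $\Qa^p$ supplied by Proposition~\ref{3:basicPropositionBP}(1).

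First, the commutation relations
\begin{equation*}
[\Q,f]=-2i\pa f,\qquad [\Qa,f]=-2i\pd f,\qquad [\Q,\Qa]=2\Bn+2\bb
\end{equation*}
all involve multiplication by bounded functions under our hypotheses (the $f_j$ have bounded derivatives, and $\bb\in S_\b$ is itself bounded). I apply them finitely many times to $\LF=f_1T_1f_2T_2\cdots T_m$ so as to move every function $f_j$ to the leftmost position and then reorder the $T_k$'s, placing all the $\Q$'s to the right of all the $\Qa$'s. This rewrites $\LF=\sum_{p,q}g_{p,q}\Qa^p\Q^q$ with smooth coefficients $g_{p,q}$ having all derivatives bounded. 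Since $\Q^q u=0$ on $\Hc_0$ whenever $q\geq 1$, the restriction to $\Hc_0$ collapses to $\LF u=\sum_{p=0}^{m}g_p\Qa^p u$ with $g_p:=g_{p,0}$. Expanding $|\LF u|^2$ then reduces $\wF$ to a finite sum of sesquilinear forms $(u,v)\mapsto(W_{p,q}\Qa^p u,\Qa^q v)$ with $W_{p,q}:=Wg_p\overline{g_q}\in S_\n$, so it suffices to prove compactness of each such form on $\Hc_0$.

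Now let $u_n\to 0$ weakly in $\Hc_0$. By Proposition~\ref{3:basicPropositionBP}(1), applied with exponents $p$, $q$, and $p+1$, the sequences $\Qa^p u_n$, $\Qa^q u_n$, and $\Qa^{p+1}u_n$ are bounded in $L_2$. Since $\Qa=-2i\pd-\overline{\Ac}$ is a first-order elliptic operator (its principal symbol $\xi_1-i\xi_2$ vanishes only at $\xi=0$) with locally bounded coefficients, the $L_2$-bounds on $\Qa^p u_n$ and $\Qa(\Qa^p u_n)=\Qa^{p+1}u_n$ force $\{\Qa^p u_n\}$ to be bounded in $H^1$ on every compact set by interior elliptic regularity. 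Rellich's compactness theorem then produces a subsequence along which $\Qa^p u_n$ converges strongly in $L_2$ on every ball; its weak $L_2$-limit is $0$ (since $\Qa^p|_{\Hc_0}:\Hc_0\to L_2$ is bounded and hence weakly continuous), so the strong local limit is $0$ as well. The same holds for $\Qa^q u_n$ along a common further subsequence.

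Splitting the defining integral at radius $R$,
\begin{equation*}
|(W_{p,q}\Qa^p u_n,\Qa^q u_n)|\leq\Bigl(\sup_{|x|>R}|W_{p,q}|\Bigr)\|\Qa^p u_n\|\,\|\Qa^q u_n\|+\|W_{p,q}\|_\infty\|\Qa^p u_n\|_{L_2(|x|\leq R)}\|\Qa^q u_n\|_{L_2(|x|\leq R)},
\end{equation*}
one makes the first term small (uniformly in $n$) by choosing $R$ large, using $W_{p,q}\in S_\n$ with $\n<0$, and for that fixed $R$ the second term goes to zero as $n\to\infty$ by the strong local convergence. A standard subsequence-of-every-subsequence argument then gives $\wF[u_n]\to 0$ for the whole sequence, proving the claim. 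The main obstacle is precisely this passage from weak convergence in $\Hc_0$ to strong local convergence of the $\Qa^p u_n$: the \emph{higher-order} bound on $\Qa^{p+1}u_n$ needed to activate elliptic regularity is what makes Proposition~\ref{3:basicPropositionBP}(1) indispensable, while Step 1 is a purely combinatorial bookkeeping exercise once one checks that each commutation error carries only bounded-derivative coefficients.
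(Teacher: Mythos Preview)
Your proof is correct and takes a genuinely different route from the paper's. The paper also begins by commuting, but instead of stopping at $\LF u=\sum_p g_p\Qa^p u$ it goes further: using that $\Qa^*=\Q$, it moves every $\Qa$ appearing in $(W\LF u,\LF u)$ across to the opposite factor (where it becomes a $\Q$), and then pushes all resulting $\Q$'s to the far right where they annihilate $u\in\Hc_0$. This collapses the entire form to a pure multiplication form $(W_1 u,u)$ with $W_1\to 0$ at infinity. The compactly supported part of $W_1$ is then handled by writing $u=e^{-\Psi}h$ with $h$ entire and invoking the Cauchy integral formula to produce an explicit bounded-kernel (hence Hilbert--Schmidt) integral operator.

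Your argument trades this explicit analytic representation for a soft Rellich argument: you keep the $\Qa^p$'s, use Proposition~\ref{3:basicPropositionBP}(1) for the uniform $L_2$ bound on $\Qa^{p+1}u_n$ needed to activate interior elliptic regularity for $\pd$, and then split the integral at radius $R$. This is more robust---it would work for null-spaces not consisting of analytic functions---and sidesteps the Cauchy-kernel machinery entirely, whereas the paper's route is more constructive and yields the sharper conclusion that the compactly supported piece is actually Hilbert--Schmidt. One small slip: your claim $W_{p,q}=Wg_p\overline{g_q}\in S_\n$ is not literally correct, since the $g_p$ are only bounded with bounded derivatives and need not decay; but your splitting argument only uses that $W_{p,q}$ is bounded and tends to zero at infinity, which does hold.
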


 \begin{proof}
Let $u=e^{-\Psi}h$ be a function in
 $\Hc_0$, so $h(z)$ is an analytical function.
We write the quadratic form \eqref{3:Qform} as

\begin{equation}\label{3:Qform1}
\wF[u]=(W \LF u, \LF u).
\end{equation}
Now we move all the operators $\Qa$ in $\LF$ from the
second factor in \eqref{3:Qform1} to the first one and
from the first factor to the second one; thus they
turn into $\Q$. In the process of commuting these
$\Qa$ with $\Q$ and the functions $W$ and $f_j$, some
derivatives of these functions appear; the function
$W$ goes to zero at infinity, together with
derivatives, the derivatives of $f_j$ are bounded.
Then, by means of commuting the operators $\Q$ with
the functions, we move all entries of $\Q$ in the
first and in the second factors in \eqref{3:Qform1} to
utmost right position, where they vanish since $\Q
u=0$ for $u\in \Hc_0$. The only remaining term in the
form $\wF[u]$ will be
\begin{equation}\label{3:Qform2}
\wF[u]=(W_1  u,  u),
\end{equation}
where $W_1$ is a function (a combination of $W,
f_j,\bb$ and their derivatives) tending to zero at
infinity. Now take  $\ve>0$ and represent $W_1$ as
$W_1=W_{1,\ve}+W'_{1,\ve}$ so that $|W'_{1,\ve}|<\ve$
and $W_{1,\ve}$ has compact support. For
$(W'_{1,\ve}u,u)$, we have the estimate by $\ve
||u||^2$, so the corresponding operator has norm not
greater than $\ve$. For $(W_{1,\ve}u,u)$, we take some
$\Rb$ such that the support of the function
 $W_{1,\ve}$ lies inside the circle $C_\Rb$ with radius $\Rb$
 centered in the origin. For each
 $r\in(\Rb,2\Rb)$ we write the Cauchy  representation
 for an analytical function $h(z)$:
 \begin{equation}\label{3:IntegralRepr1}
    h(z)=(2\pi i)^{-1}
    \int_{C_r}h(\z){(z-\z)}^{-1}
    d{\z}.
\end{equation}
for some fixed function $\x(r)\in
C_0^\infty(\Rb,2\Rb)$, $\int \x(r)dr =1$, we multiply
\eqref{3:IntegralRepr1} by $\x(r)$ and integrate in
$r$ from $\Rb$ to $2\Rb$. This gives the integral
representation of $h(x),\; |x|<\Rb$, in the form
$h(x)=$ $\int_{\Rb<|y|<2\Rb} K(x,y)h(y) dy$, with
smooth bounded kernel $K(x,y)$. After applying $\LF$
in $x$ variable, we obtain the representation for $\LF
u=\LF(e^{-\Psi}h)$:
\begin{equation}
\nonumber
    \LF u(x)=\int_{\Rb<|y|<2\Rb}
    e^{-\Psi(x)}K^{\LF}(x,y)e^{\Psi(y)}u(y)dy=(\KF^{\LF}u)(x).
\end{equation}
The integral operator $|W_{1,\ve}|^{1/2}\KF^{\LF}$ has
a bounded kernel with compact support and therefore
 is compact in
$L_2$, and thus the quadratic form $\wF[u]$ can be
written as $\wF[u]=(\sgn W_{1,\ve}
|W_{1,\ve}|^{1/2}\KF^{\LF}u,$ $
|W_{1,\ve}|^{1/2}\KF^\LF u)$ and therefore it is
compact. Now we see that the quadratic form
\eqref{3:Qform} can be for any $\ve$ represented as
the sum of a compact form and a form with norm less
than $\ve$, and this proves the required
compactness.\end{proof}

Now we are able to justify our construction of the
spaces $\Gc_q$.

\begin{proposition}\label{3:SubspacesGood} The sets
$\Gc_q$ defined in \eqref{3:Subspaces} are closed
subspaces in $L_2$.\end{proposition}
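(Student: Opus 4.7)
The plan has three parts: first extend $\Qa^q$ to a bounded operator from $\Hc_0$ into $L_2$; second, read off a convenient representation for $(\Qa^q)^*\Qa^q$ on $\Hc_0$; third, conclude closed range from a standard compact-perturbation-of-the-identity argument. The case $q=0$ is trivial since $\Gc_0=\Hc_0$ is a spectral subspace, so I assume $q\ge 1$.

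For the extension I will use the density (recalled at the beginning of this section) of functions of the form $p(z)\,e^{-\Psi}$, with $p$ a polynomial, in $\Hc_0$. On such $u$ the expression $\Qa^q u$ clearly lies in $L_2$, and Proposition~\ref{3:basicPropositionBP}(1) supplies the identity
\[
\|\Qa^q u\|^2 = C_q\|u\|^2+(\Zc_q[\bb]u,u).
\]
Since $\Zc_q[\bb]\in S_\b$ is bounded on $\R^2$, this yields $\|\Qa^q u\|^2\le (C_q+\|\Zc_q[\bb]\|_\infty)\|u\|^2$, and $\Qa^q$ extends by continuity to a bounded operator $T\colon\Hc_0\to L_2$ with $\Gc_q=T(\Hc_0)$.

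Next, the same identity, read as a formula for the quadratic form of $T^*T$ on $\Hc_0$, gives
\[
T^*T=C_q\,I+K_q,
\]
where $K_q$ is the self-adjoint operator on $\Hc_0$ with form $u\mapsto(\Zc_q[\bb]u,u)$. Applying Lemma~\ref{3:LemComp} with $W=\Zc_q[\bb]\in S_\b$, $\b<-2$, and $\LF$ the trivial zero-order operator (multiplication by $1$), I obtain that this form is compact on $\Hc_0$, so $K_q$ is compact self-adjoint. Consequently the spectrum of $T^*T$ accumulates only at $C_q>0$, and $0$ is at worst an isolated eigenvalue of $T^*T$ of finite multiplicity. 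Hence $T^*T$ is bounded below by some $\varepsilon>0$ on $(\Ker T^*T)^\perp=(\Ker T)^\perp$, which gives $\|Tu\|\ge\sqrt{\varepsilon}\|u\|$ on $(\Ker T)^\perp$ and therefore closed range of $T$; that is, $\Gc_q=\Ran T$ is closed in $L_2$.

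The most delicate ingredient in this plan is the compactness of $K_q$: mere boundedness and power-like decay of the symbol $\Zc_q[\bb]$ would not produce compactness of multiplication on $L_2$, and one genuinely needs the Fock-like structure of $\Hc_0$, exploited in Lemma~\ref{3:LemComp} via the Cauchy integral representation of the entire part of elements of $\Hc_0$. Once that compactness is available, the rest of the argument reduces to standard Hilbert-space spectral theory.
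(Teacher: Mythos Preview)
Your proof is correct and follows essentially the same route as the paper. The paper also invokes Proposition~\ref{3:basicPropositionBP} to get the identity $\|\Qa^q u\|^2=C_q\|u\|^2+(\Zc_q[\bb]u,u)$ and Lemma~\ref{3:LemComp} to obtain compactness of the $\Zc_q[\bb]$-term on $\Hc_0$; it then phrases the conclusion as ``$C_q^{-1}P_0\Q^q$ is a left parametrix for $\Qa^q:\Hc_0\to L_2$'', which is exactly your relation $T^*T=C_qI+K_q$ with $K_q$ compact (since the Hilbert-space adjoint of $\Qa^q:\Hc_0\to L_2$ is $P_0\Q^q$).
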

\begin{proof} The fact that $\Gc_q\subset L_2$ follows directly
from Proposition~\ref{3:basicPropositionBP}. Next, the
relation \eqref{3:BP:equation} can be written as
\begin{equation}\label{3:Subspaces:1}
    (P_0 \QF^q \Qa^q u,u)=C_q(u,u)+(P_0\Zc_q[\bb]
    u,u);\; u=P_0 u\in\Hc_0.
\end{equation}
 In the second term in
\eqref{3:Subspaces:1}, by Lemma~\ref{3:LemComp}, the
operator $P_0\Zc_q[\bb]$ is compact in $\Hc_0$, and
therefore we can understand \eqref{3:Subspaces:1} as
showing that the operator $C_q^{-1} P_0 \Q^q $ is a
left parametrix for $\Qa^q:\Hc_0\to L_2$. This implies
that the range of $\Qa^q$ is closed.\end{proof}

The null space of $\Qa$ and therefore of $\Qa^q$ is
zero. Consider the operator $\Qa^q$ as acting from
$\Gc_0=\Hc_0$ to $\Gc_q$. This is a bounded invertible
operator, therefore the inverse, that we denote by
$\Qa^{-q}$, is a bounded operator from $\Gc_q$ to
$\Gc_0$.  It is a compact perturbation of $\Q^q$.

\section{Approximate spectral projections}\label{Projections4}

In this section we prove that the subspaces $\Gc_q$
are very good approximations to the  spectral
subspaces $\Hc_q$ of the operator $\PF_-$, and to the
spectral subspaces of $\PF_-(V)$. Closeness of
subspaces will be measured by closeness of orthogonal
projections onto them. Recall that the projection onto
$\Hc_q$ is denoted by $P_q$. Let $Q_q$ be the
 projection onto $\Gc_q$.
\begin{theorem}\label{4:ThmCloseness}The projections
$P_q$ and $Q_q$ are \emph{close}: for any $N$, and any
collection of the operators $T_j,\; j=1,\dots,N$, each
 of $T_j$ being $\Q$ or $\Qa$,
the operator $\Tc(P_q-Q_q)\Tc$, is compact,
$\Tc=T_1T_2\dots T_N$.\end{theorem}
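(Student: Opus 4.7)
The case $q=0$ is trivial: $\Gc_0=\Hc_0$ equals the kernel of $\PF_-$, and since $\d_0$ contains only the isolated eigenvalue $\L_0=0$, one has $P_0=Q_0$. Fix $q\ge1$. My plan is first to express $Q_q$ concretely via the parametrix of Proposition~\ref{3:SubspacesGood}. Set $S=\Qa^q|_{\Hc_0}$; Proposition~\ref{3:basicPropositionBP} gives $S^*S=C_q I_{\Hc_0}+P_0\Zc_q[\bb]P_0$, and Lemma~\ref{3:LemComp} (applied to $W=\Zc_q[\bb]\in S_\b$) makes the second summand compact on $\Hc_0$. Inverting,
\[
    Q_q=S(S^*S)^{-1}S^*=C_q^{-1}\Qa^q P_0\Q^q+\Qa^q\Kc_q\Q^q,
\]
with $\Kc_q$ compact on $\Hc_0$.

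Next I establish the analogous structural identity for $P_q$. Using $[\Q,\Qa]=2\Bb=2\Bn+2\bb$ and $\PF_-=\Qa\Q$, an easy induction gives
\[
    [\PF_-,\Qa^q]=\L_q\Qa^q+\Rc_q,\qquad \Rc_q=2\sum_{j=0}^{q-1}\Qa^{j+1}\bb\,\Qa^{q-1-j}.
\]
Every summand in $\Rc_q$ carries the decaying factor $\bb\in S_\b$. Since $\PF_- u=0$ for $u\in\Hc_0$, I obtain $(\PF_- - \L_q)\Qa^q P_0=\Rc_q P_0$. Rewriting this as $(\PF_- - \z)\Qa^q P_0=(\L_q-\z)\Qa^q P_0+\Rc_q P_0$, applying $R_-(\z)$, and integrating along the contour $\G_q$ yields the key identity
\[
    (I-P_q)\Qa^q P_0=-\Nc_q\Rc_q P_0,\qquad \Nc_q:=\frac{1}{2\pi i}\oint_{\G_q}\frac{R_-(\z)}{\L_q-\z}\,d\z,
\]
where $\Nc_q$ is bounded and, by Proposition~\ref{2:commutatorEstimate}, inherits the weighted commutator bounds of $R_-$. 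Taking adjoints gives $P_0\Q^q(I-P_q)=-P_0\Rc_q^*\Nc_q^*$. Combined with the first step, this presents $P_q-Q_q$ as a finite sum of operators each containing either an $\Rc_q$-factor (hence a bracketed copy of the decaying $\bb$) or the compact factor $\Kc_q$.

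To finish, I sandwich by $\Tc=T_1\cdots T_N$. The outer $\Tc$'s are commuted inward past the internal $\Qa^j$, $\Q^j$, $P_0$, and $\Nc_q$: each commutator is bounded, by Proposition~\ref{2:commutatorEstimate} for the resolvent-type factors and by Lemma~\ref{2:LemmaBound} together with Proposition~\ref{2:PropComp} for the $\Qa^j,\Q^j$ acting on vectors mediated by $P_0$ or $P_q$. After all such rearrangements, every surviving summand contains at least one factor of the form $\Zc_q[\bb]$, $\bb$, or a derivative of $\bb$, sandwiched between bounded operators and a $P_0$- or $P_q$-projection. Lemma~\ref{3:LemComp} then supplies compactness of the $P_0$-sandwiched summands, while Proposition~\ref{2:PropComp} handles the $P_q$-sandwiched ones. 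The main obstacle is combinatorial bookkeeping: each commutation of the outer $\Tc$'s past the internal operators produces a tree of correction terms, and one has to verify that every surviving summand inherits a decay-bearing factor from $\Rc_q$ or the compact $\Kc_q$, so that no term is left without a source of compactness.
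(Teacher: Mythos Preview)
Your argument has a genuine structural gap. The commutator identity $(I-P_q)\Qa^qP_0=\pm\Nc_q\Rc_qP_0$ (and its adjoint) shows only that the range of $\Qa^qP_0$, i.e.\ $\Gc_q$, sits approximately inside $\Hc_q$; it controls $(I-P_q)Q_q$. But compactness of $P_q-Q_q$ requires both directions: from $(P_q-Q_q)^2=P_q(I-Q_q)P_q+(I-P_q)Q_q(I-P_q)$ one sees that $P_q(I-Q_q)P_q$ must also be compact, and nothing in your argument addresses this. When you substitute your identities into $\SF_q$ you obtain $\SF_q=C_q^{-1}P_q\Qa^qP_0\Q^qP_q+(\Rc_q\text{-terms})=P_q\SF_qP_q+(\Rc_q\text{-terms})$, so your decomposition of $P_q-Q_q$ leaves the residual term $P_q(I-\SF_q)P_q$ unaccounted for. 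That term need not carry an $\Rc_q$- or $\Kc_q$-factor from what you have written. A toy picture: if $Q$ projects onto an infinite-codimensional closed subspace of $\Ran P$, then $(I-P)Q=0$ is compact while $P-Q$ is not.

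The paper avoids this asymmetry by computing $\SF_q$ \emph{directly} as a function of $\PF_-$: writing $P_0=(2\pi i)^{-1}\oint_{\G_0}R_-(\z)\,d\z$, using the resolvent identity $R_-(\z)=R_+(2\Bn+\z)-R_-(\z)(2\bb)R_+(2\Bn+\z)$ and the intertwining $\Qa R_+(\z)\Q=\PF_-R_-(\z)$, one obtains (for $q=1$, then by induction) $\SF_q-P_q=C_q(\PF_--\L_q)P_q+\Zb_q$, a symmetric formula from which the two-sided closeness is immediate. Your approach can be repaired by establishing the missing direction separately, e.g.\ by showing that $(I-P_0)\Q^qP_q$ carries compactness: from $\PF_-\Q^qP_q=\Q^q(\PF_--\L_q)P_q-\Rc_q^*P_q$ and the bounded invertibility of $\PF_-$ on $\Hc_0^\perp$ one gets $(I-P_0)\Q^qP_q$ expressed through the compact $(\PF_--\L_q)P_q$ and the decaying $\Rc_q^*$. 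But this extra step is exactly the substance you are missing, and carrying it through to control $P_q(I-\SF_q)P_q$ still requires work comparable to the paper's resolvent computation.
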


In justifying the theorem, we need two technical
lemmas, both concerning the  properties of products
 of many copies of the
resolvents of $\PF_+$ and $\PF_-$, the creation and
 annihilation operators $\Qa$ and $\Q$,   functions
$h_j$ with all derivatives bounded, and, possibly, the
 spectral projection $P_q$.
In such product, we assign order $1$ to $\Qa$ and
$\Q$, order $-2$ to the resolvent, order $0$ to
functions and projections. The order of the product is
defined as the sum of orders of factors.

\begin{lemma}\label{4:LemmaComp1}Let  $\AF$ be
 the product
  of
creation, annihilation operators, resolvents, and
functions $h_j$, have negative order, and let at least
one of the functions $h_j$ belong to $S_\n, \ \n<0$.
Then $\AF$ is compact.\end{lemma}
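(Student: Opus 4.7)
The plan is to commute the distinguished decaying factor $h_{j_0}\in S_\n$ to the leftmost position of the product $\AF$ and to absorb all creation/annihilation factors into adjacent resolvents, so that $\AF$ reduces, modulo manifestly compact terms, to a product $h_{j_0}R_\pm B$ with $B$ bounded. The compactness of $\AF$ then follows from that of $h_{j_0}R_\pm$. The basic boundedness tool is Proposition~\ref{2:commutatorEstimate}: since $\Q$ and $\Qa$ are linear combinations of $\Pi_1,\Pi_2$, the compositions $\Q R_\pm$, $\Qa R_\pm$ and their adjoints are bounded, and in fact remain bounded between any weighted spaces $\langle x\rangle^l L_2$.

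I would then carry out an induction on the ``weight'' of $\AF$, defined as the number of creation/annihilation factors plus twice the number of resolvents. The first move in the induction step is to slide $h_{j_0}$ to the leftmost position. Each commutation $[\Qa,h_{j_0}]$ or $[\Q,h_{j_0}]$ produces a first-order derivative of $h_{j_0}$, which lies in $S_{\n-\d}$; each commutation past a resolvent gives $[h_{j_0},R_\pm]=-R_\pm[h_{j_0},\PF_\pm]R_\pm$, where $[h_{j_0},\PF_\pm]$ is a first-order differential operator with coefficients in $S_{\n-\d}$. Both types of remainders preserve the hypotheses of the lemma (a product of the allowed operators with at least one decaying function, still of negative order) and have strictly smaller weight, so the induction hypothesis applies to them.

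With $h_{j_0}$ at the left, the remaining product still has strictly negative order; repeatedly using the boundedness of $\Pi_j R_\pm$ (and its adjoint) to pair each creation/annihilation operator with an adjacent resolvent, each commutator remainder $R_\pm[\Pi_j,\PF_\pm]R_\pm$ being again bounded or of strictly smaller weight, I reduce $\AF$ modulo compacts to $h_{j_0}R_\pm B$ with $B$ bounded. The main obstacle, and the only step that uses the decay of $h_{j_0}$ essentially, is the compactness of $h_{j_0}R_\pm$ itself. I would split $h_{j_0}=h_\Rb+h'_\Rb$ with $h_\Rb$ supported in $|x|\le\Rb$ and $\|h'_\Rb\|_\infty\to 0$ as $\Rb\to\infty$; the operator $h_\Rb R_\pm$ maps $L_2$ into compactly supported functions in $H^2$ by local elliptic regularity of $\PF_\pm$, hence is compact by the Rellich theorem, while $\|h'_\Rb R_\pm\|\le \|h'_\Rb\|_\infty\|R_\pm\|\to 0$. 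Therefore $h_{j_0}R_\pm$ is a norm limit of compact operators, completing the argument.
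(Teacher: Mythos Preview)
Your overall plan is sound, and the final step (compactness of $h_{j_0}R_\pm$ via a cutoff and Rellich) is fine, but the induction does not close. You define the weight as the number of creation/annihilation factors plus twice the number of resolvents and claim that the commutator remainders have strictly smaller weight. This fails whenever you commute past a resolvent: from $[h_{j_0},R_\pm]=-R_\pm[h_{j_0},\PF_\pm]R_\pm$ and the fact that $[h_{j_0},\PF_\pm]=\sum_j\bigl((i\partial_j h_{j_0})\Pi_j+\Pi_j(i\partial_j h_{j_0})\bigr)$ is first order in $\Pi_j$, the remainder replaces one resolvent (weight contribution $2$) by two resolvents and one first-order factor (weight contribution $5$), so the total weight \emph{increases} by $3$. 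The same happens in your second stage when you commute $\Pi_j$ past $R_\pm$: since $[\Pi_j,\PF_\pm]$ is again first order in $\Pi_k$ (with coefficients built from $\Bb$), each such remainder also raises the weight. Hence the induction hypothesis cannot be invoked on the remainder terms. Switching the induction parameter to the order does not help directly either, because the main term (with $h_{j_0}$ shifted one position) keeps the same order, so you would still owe an independent compactness proof for products of arbitrarily negative order.

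The paper avoids this bookkeeping by appealing to Lemma~\ref{2:LemmaBound}, the a~priori bound $\|T_1\cdots T_N u\|^2\le C(\PF_-^N u,u)+C'\|u\|^2$. This says that any block of $N$ factors $\Q,\Qa$ is controlled by $(\PF_-+1)^{N/2}$, and since $\AF$ has strictly negative order there are always enough resolvents to absorb this power. Together with Proposition~\ref{2:commutatorEstimate} (to pass weights $\langle x\rangle^l$ through resolvents) this lets one sandwich a factor $h_{j_0}(\PF_-+1)^{-k}$, $k>0$, between bounded operators, and compactness then follows from your Rellich argument. Note in particular that Proposition~\ref{2:commutatorEstimate} alone, which gives only boundedness of $\Pi_jR_\pm$, is not enough to ``pair each creation/annihilation operator with an adjacent resolvent'' when two or more of them occur consecutively (for instance in $\Q\Q R_\pm$); Lemma~\ref{2:LemmaBound} is precisely the missing estimate.
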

\begin{lemma}\label{4:LemmaComp2} Let  $\AF$ be
the product of creation, annihilation operators,
resolvents, functions and the projection $P_q$. Then
$\AF$ is bounded. If, moreover,
  at least one of $h_j$ belongs to $S_\n, \ \n<0,$ then $\AF$ is compact.\end{lemma}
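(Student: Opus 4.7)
The plan is to systematically push all creation and annihilation operators in $\AF$ to be adjacent to $P_q$, where Lemma~\ref{2:LemmaBound} then supplies boundedness. Assign the grading $\mathrm{ord}(\Q)=\mathrm{ord}(\Qa)=1$, $\mathrm{ord}(R_\pm)=-2$, $\mathrm{ord}(h_j)=\mathrm{ord}(P_q)=0$. The key commutation identities are $[\Q,h]=-2i\bar\partial h$ and $[\Qa,h]=-2i\partial h$, which are bounded functions (reducing the operator order by one); $[\Q,\PF_-]=2\Bb\Q$, $[\Qa,\PF_-]=-2\Qa\Bb$, together with their $\PF_+$ analogues, which yield $[\Q,R_\pm]=-R_\pm[\Q,\PF_\pm]R_\pm$ of strict order $-3$, two units below the naive $-1$; and $[R_-,P_q]=0$. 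The non-trivial commutator $[R_+,P_q]$ is handled via the resolvent identity $R_+-R_-=-2R_+\Bb R_-$: any $R_+$ adjacent to $P_q$ may be replaced by $R_-$ modulo a strictly lower-order tail that moreover contains the decaying factor $\bb$.

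For boundedness, write $\AF=\mathcal{L}_1 P_q \mathcal{L}_2$ and recursively move every $\Q,\Qa$ in $\mathcal{L}_2$ toward $P_q$, with a symmetric sweep on $\mathcal{L}_1$. Each commutation produces two summands: a ``moved'' term, in which the operator sits one position closer to $P_q$, and a ``commutator'' term, which either has one fewer creation/annihilation factor (when crossing an $h_j$) or strictly lower total grading (when crossing an $R_\pm$). A suitable lexicographic induction, on the pair (number of $\Q,\Qa$ factors, total distance of those factors from $P_q$), shows the expansion terminates in a finite sum of products of the form
\begin{equation*}
\mathcal{M}_1\cdot\bigl(T_{i_1}\cdots T_{i_a}\,P_q\,T_{j_1}\cdots T_{j_b}\bigr)\cdot\mathcal{M}_2,
\end{equation*}
where $T_k\in\{\Q,\Qa\}$ and $\mathcal{M}_1,\mathcal{M}_2$ are finite products of bounded functions and resolvents, the latter bounded by Proposition~\ref{2:commutatorEstimate}. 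Lemma~\ref{2:LemmaBound}, applied directly to $T_{j_1}\cdots T_{j_b}P_q$ and, via the adjoint, to $P_q T_{i_a}^{\ast}\cdots T_{i_1}^{\ast}$, bounds the central factor and hence $\AF$.

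For the compactness claim, I would track the distinguished $h\in S_\nu$, $\nu<0$, through the same expansion: at every commutation either $h$ itself or a derivative $\partial^\alpha h$ (produced by $[\Q,h]$, $[\Qa,h]$) survives as a factor, and by the closure property~\eqref{2:MagFPert1} of $S_\nu$ each such factor still tends to zero at infinity. Every summand then has the form $\mathcal{A}\cdot\tilde h\cdot\mathcal{B}$ with $\mathcal{A},\mathcal{B}$ bounded and $\tilde h\to 0$ at infinity, reducing the claim to compactness of $\tilde h\,P_q$. That in turn is standard: since $P_q=(2\pi i)^{-1}\oint_{\Gamma_q}R_-(\z)\,d\z$ has a locally Hilbert--Schmidt integral kernel, $\chi_{|x|<R}\tilde h P_q$ is Hilbert--Schmidt for each $R$, while $\|\chi_{|x|\ge R}\tilde h P_q\|\le \sup_{|x|\ge R}|\tilde h|\to 0$ as $R\to\infty$; hence $\tilde h P_q$ is a norm limit of compact operators, and therefore compact.

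The main obstacle is the bookkeeping: one must verify that the lexicographic induction measure strictly decreases in every possible commutation, especially when commutators $[\Q,R_\pm]$, $[\Qa,R_\pm]$ introduce fresh pairs of resolvents flanking a newly produced creation/annihilation factor. The two-unit strict order reduction of these commutators exactly absorbs the two new resolvents, so the measure does decrease, but one has to track this uniformly across all orderings of operators in $\AF$ and across the non-commutativity of $R_+$ with $P_q$. Once termination is established, both conclusions reduce to the two explicit inputs: Lemma~\ref{2:LemmaBound} for the central factor, and the compactness of $\tilde h P_q$ for the decaying factor.
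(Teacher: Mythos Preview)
Your overall strategy—commute the creation and annihilation operators toward $P_q$, then invoke Lemma~\ref{2:LemmaBound} together with Proposition~\ref{2:PropUnpertSubspace}—is exactly the route the paper points to (it refers to \cite{RozTa} and names these same auxiliary results). The problem is that your termination argument does not work as stated. The lexicographic pair (number of $\Q,\Qa$ factors, total distance from $P_q$) fails to decrease on the resolvent commutator: in $[\Q,R_-]=-2R_-\Bb\,\Q\,R_-$ the function $\Bb=\Bn+\bb$ is merely bounded, the displaced $\Q$ sits at the \emph{same} distance from $P_q$ as before, and every $\Q,\Qa$ lying to its left is now two positions \emph{farther} from $P_q$ because of the inserted block $R_-\Bb$. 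Hence the first coordinate is unchanged and the second goes up. Your sentence ``the two-unit strict order reduction \dots\ exactly absorbs the two new resolvents, so the measure does decrease'' silently swaps distance for total order; these are different quantities, and the order drop does not feed back into the measure you declared. A workable scheme must either build the order into the induction measure, or—more cleanly—use the exact intertwining identities $\Q R_-(\z)=R_+(\z)\Q$ and $\Qa R_+(\z)=R_-(\z)\Qa$ (immediate from $\PF_+\Q=\Q\PF_-$, $\PF_-\Qa=\Qa\PF_+$) to pass $\Q,\Qa$ through resolvents with \emph{no} error when the types match, handling a mismatch by finitely many applications of $R_+-R_-=-2R_+\Bb R_-$ and then cutting the iteration once the residual tail has acquired enough extra resolvents that inserting $(\PF_-+1)^{-N}(\PF_-+1)^{N}$ next to $P_q$ and appealing to Lemma~\ref{2:LemmaBound} disposes of it directly.

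There is a second, smaller gap in the compactness step: after your expansion the decaying factor $\tilde h$ will in general sit inside $\mathcal M_1$ or $\mathcal M_2$, separated from the central block by resolvents and bounded functions, so ``reducing the claim to compactness of $\tilde h P_q$'' skips a step. The quickest repair is again to insert $(\PF_-+1)^{-N}(\PF_-+1)^{N}$ next to $P_q$: the piece carrying $\tilde h$ then becomes a product of negative order containing a decaying factor, compact by Lemma~\ref{4:LemmaComp1}, while $(\PF_-+1)^{N}P_q$ stays bounded by Proposition~\ref{2:PropUnpertSubspace}.
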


  The proof of the earlier  versions of these lemmas, with the $S_\n$ - condition
  replaced
   by the compactness of
  the support of one of the function is given in
  \cite{RozTa}. In the present formulation, the proof
  goes exactly in the same way, only using the new
  version of Lemma~\ref{2:LemmaBound} and Lemma \ref{2:LemmaBound.1}, fit for the
  relaxed conditions for the functions.

The proof of the theorem  goes in the following way.
We  construct an intermediate operator $\SF_q$ with
range in $\Gc_q$ and prove that $\SF_q$ is close both
to $P_q$ and $Q_q$. For $q>0$ we define the operator
$\SF_q$ as
\begin{equation}\label{4:ApproxProj}
    \SF_q=C_q^{-1}\Qa^q P_0 \Q^q,\; C_q=q!(2\Bn)^q.
\end{equation}
So, our expression for the approximate spectral
projection is just a natural modification of the exact
formula \eqref{2:Projections} for the unperturbed
operator. Equivalently, the operator $\SF_q$ can be
described by the formula
\begin{equation}
\nonumber
 \SF_q=C_q^{-1} \GF \GF^*,\qquad \GF=\Qa^qP_0;
\end{equation}
in
Proposition~\ref{3:basicPropositionBP} this operator is shown to be bounded.

The \emph{proof} of Theorem~\ref{4:ThmCloseness} will
consist of two parts, showing that $\SF_q$ is close to
$P_q$ and showing that it is close to $Q_q.$ The
second part is proved exactly like in \cite{RozTa},
since it is based upon Lemma \ref{4:LemmaComp2} only.
As for the first part, we need a more detailed
information of the difference $\SF_q-P_q$. This
information is given in the following statement.

\begin{proposition}\label{4:Prop:SclosetoP} The
operator $\SF_q$ is close to the projection $P_q$.
Moreover, the difference $\SF_q-P_q$ has the the form
\begin{equation}\label{4:SclosetoPbest}
\SF_q-P_q = C_q(\PF_--\L_q)P_q+\Zb_q,
\end{equation}
where $\Zb_q$ is such an operator that $\langle
x\rangle^{-\b+\d} \Tc\Zb_q \Tc''$ is bounded for any
$\Tc,\Tc'$ being finite products of creation and
annihilation operators.
\end{proposition}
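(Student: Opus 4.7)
The strategy is to compute $\SF_q - P_q$ directly by combining the explicit algebraic structure $\SF_q = C_q^{-1}\GF\GF^*$ with $\GF = \Qa^q P_0$, together with Proposition \ref{3:basicPropositionBP} and iterated commutator identities. The key algebraic input is that Proposition \ref{3:basicPropositionBP} yields
$$
\GF^*\GF = P_0\Q^q\Qa^q P_0 = C_q P_0 + P_0 \Zc_q[\bb] P_0,
$$
so $\SF_q$ differs from a true projection onto $\Gc_q$ only by an operator whose "defect part" is sandwiched between $\Qa^q P_0$ and its adjoint with a symbol in $S_\b$. In parallel, the commutation rule $[\Q,\PF_-]=2\Bb\Q=2\Bn\Q+2\bb\Q$, iterated, gives $[\PF_-,\Q^q] = -\L_q\Q^q + \Rc_q$, where $\Rc_q$ is a finite sum of products of creation/annihilation operators in which at least one factor of $\bb$ or a derivative of $\bb$ occurs, so $\Rc_q$ inherits the decay of $\bb$ (an element of $S_{\b-\d}$ after moving functions to one side).

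I would split $\SF_q - P_q = \SF_q(I-P_q) - (I-\SF_q)P_q$ and analyze the two pieces separately. For $(I-\SF_q)P_q$, I apply $\SF_q$ to $u \in \Hc_q$: writing $\Q^q u = P_0\Q^q u + (I-P_0)\Q^q u$, the identity $\PF_+\Q^q u = \Q^q\PF_- u + \Rc_q u$ (since $\PF_+ = \Q\Qa$ and $\PF_- = \Qa\Q$) together with the spectral gap of $\PF_+$ on $\Ran(I-P_0)$ lets me write $(I-P_0)\Q^q u = \PF_+^{-1}(I-P_0)(\Q^q\PF_- u + \Rc_q u)$; the first summand contributes the leading $C_q(\PF_--\L_q)P_q$ after multiplication by $C_q^{-1}\Qa^q$ and use of the Proposition \ref{3:basicPropositionBP} identity, while the second stays within the $\Zb_q$-class. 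For $\SF_q(I-P_q)$ I represent $P_q$ as the contour integral $\frac{1}{2\pi i}\oint_{\G_q}R_-(z)\,dz$, commute $\Q^q$ through $R_-(z)$ using $[\PF_-,\Q^q] = -\L_q\Q^q + \Rc_q$, so that $\Q^q R_-(z) = R_-(z+\L_q)\Q^q + R_-(z+\L_q)\Rc_q R_-(z)$; on $\Ran(I-P_q)$ the resolvents are uniformly bounded along $\G_q+\L_q$, so the surviving contribution carries the factor $\Rc_q$ and thus the decay of $\bb$, contributing only to $\Zb_q$.

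The main obstacle is the bookkeeping of all the error terms produced by these iterated commutations and identifying which single combination assembles into the leading expression $C_q(\PF_--\L_q)P_q$, with every other term carrying at least a factor of $\bb$ or $\partial^k \bb$, hence membership in $S_{\b-\d}$ modulo bounded operators. Once this is achieved, the weighted bound $\langle x\rangle^{-\b+\d}\Tc\Zb_q\Tc''$ bounded follows because each surviving summand of $\Zb_q$ has the structure \emph{(creation/annihilation product)} $\cdot$ \emph{(decaying function in $S_{\b-\d}$)} $\cdot$ \emph{(resolvent or projection)} $\cdot$ \emph{(creation/annihilation product)}; Proposition \ref{2:commutatorEstimate} and Lemma \ref{4:LemmaComp2} then transport the weight $\langle x\rangle^{-\b+\d}$ across the creation/annihilation factors $\Tc$ and $\Tc''$ while preserving boundedness, exactly the property claimed for $\Zb_q$.
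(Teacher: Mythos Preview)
Your route differs substantially from the paper's, and the gap you flag as ``bookkeeping'' is in fact the heart of the matter and is not resolved by your sketch. The paper does not split $\SF_q-P_q=\SF_q(I-P_q)-(I-\SF_q)P_q$. Instead it writes $\SF_q=C_q^{-1}(2\pi i)^{-1}\oint_{\G_0}\Qa^qR_-(\z)\Q^q\,d\z$, inserts the resolvent identity $R_-(\z)=R_+(\z+2\Bn)-R_-(\z)(2\bb)R_+(\z+2\Bn)$, and then uses the \emph{exact} intertwiner $\Qa R_+(\z)\Q=\PF_-R_-(\z)$ (equivalently $\PF_-\Qa=\Qa\PF_+$, an identity with \emph{no} $\bb$-correction). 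For $q=1$ this yields $\SF_1=C_1^{-1}\PF_-P_1$ plus a contour integral every summand of which contains an explicit factor $\bb$; the leading piece $(\PF_--\L_1)P_1$ thus falls out in one step, and the weighted analysis lives entirely in the $\bb$-remainder, where the vanishing $\oint R_\pm^k=0$ for $k\ge2$ is used to kill the dangerous commutator terms. The case $q>1$ goes by induction.

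Your decomposition does not produce this separation. First, several identities are misstated: the correct commutation is $\PF_-\Q^q=\Q^q(\PF_--\L_q)+\Rc_q'$ (your $\PF_+\Q^q=\Q^q\PF_-+\Rc_q$ is off by $\L_{q-1}\Q^q$ for $q\ge2$); $P_0$ is a spectral projection of $\PF_-$, so invoking a gap of $\PF_+$ on $\Ran(I-P_0)$ is unjustified; and the resolvent shift is $\Q^qR_-(z)=R_-(z-\L_q)\Q^q+\cdots$, the opposite sign. More seriously, Proposition~\ref{3:basicPropositionBP} gives $P_0\Q^q\Qa^qP_0=C_qP_0+P_0\Zc_qP_0$ on $\Hc_0$; it says nothing about $\Qa^q\Q^q$ on $\Hc_q$, where instead $\Qa^q\Q^q=\prod_{j=0}^{q-1}(\PF_--\L_j)+(\bb\text{-terms})$. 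On $\Hc_q$ this polynomial contributes, beyond a scalar multiple of $(\PF_--\L_q)P_q$, also $(\PF_--\L_q)^kP_q$ for $2\le k\le q$, and these carry no explicit factor of $\bb$ or $\partial\bb$. Likewise, with the corrected identity one gets $(I-P_0)\Q^qP_q=\PF_-^{-1}(I-P_0)\Q^q(\PF_--\L_q)P_q+(\bb\text{-terms})$, so after multiplying by $C_q^{-1}\Qa^q$ you obtain an \emph{operator} (not scalar) coefficient in front of $(\PF_--\L_q)P_q$. Neither of these residual pieces is placed in the $\Zb_q$-class by the mechanism you describe---decay coming from an explicit $S_{\b-\d}$ factor plus Proposition~\ref{2:commutatorEstimate}---because no such factor is present. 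The paper's use of the exact $R_+$ intertwiner is precisely what bypasses this difficulty.
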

So, the improvement, compared with Proposition 4.4 in
\cite{RozTa}, consists, first, in the separation of
the leading term in the difference, the operator
$(\PF_--\L_q)P_q$, and, secondly, in the establishing
the improved smallness of the remainder term $\Zb_q$

\begin{proof} Combinatorically, the reasoning goes in the same way as
 in \cite{RozTa}, with a natural replacement of the auxiliary results
  requiring the compactness of the support of the perturbation
   and singling out the leading term. We remind the
   main structure of the reasoning, omitting the details.
   The case $q=1$ is considered first; this case
   contains all typical features. The higher Landau
   levels, $q>1$, can be taken care of in the same way
   as in \cite{RozTa}, by the induction on $q$.

 Recall that  $R_\pm(\z)$ denotes the
resolvent of the operator $\PF_\pm$. The projection
$P_0$ can be expressed via Riesz integral
\begin{equation}
\nonumber P_0=(2\pi i)^{-1}\int_{\G_0}\Rm(\z)d\z,
\end{equation}
where $\G_0$ is the closed curve defined, together
with curves $\G_q$, in Sect.3. We are going to
transform the expression for the resolvent $\Rm(\z)$
using the commutation relations \eqref{2:ComRel.1},
\eqref{2:ComRel.2}, \eqref{2:ComRel.3}. After this,
the crucial observation is that the integral of
$R_\pm^k,\, k\ge2$ along $\G_q$ vanishes. This enables
us to dispose of terms that  before integration were
not weak enough. We start by writing
\begin{gather}\label{4:TransRes2}
\Rm(\z)=(\PF_+ -2\Bn -2 \bb-\z)^{-1}=(\PF_+ -2\Bn
-\z)^{-1}-\\ \notag (\PF_--\z)^{-1}(2\bb) (\PF_+ -2\Bn
-\z)^{-1}=\Rp(2\Bn+\z)-Z(\z)
\end{gather}
We multiply  \eqref{4:TransRes2} by
 $\Qa$ from the left and by $\Q$ from the right, as \eqref{4:ApproxProj}
  requires. For the first term  we use that
\begin{gather}\label{4:TransRes2.1}\Qa \Rp(2\Bn+\z)\Q=
\Qa(\Q\Qa -2\Bn -\z)^{-1}\Q=\\ \notag \Qa
\Q(\Qa\Q-2\Bn-\z)^{-1}=\PF_-\Rm(2\Bn+\z).\end{gather}

Integration gives
\begin{equation*}
\nonumber
    \int_{\G_0}\Qa(\PF_- -2\Bn
    -\z)^{-1}\Q d\z=\PF_- \int_{\G_1}(\PF_-
    -\z)^{-1} d\z =(2\pi i)\PF_-P_1.
\end{equation*}

So, we have
\begin{equation}\label{4:TransResA1}
    \SF_1-P_1=\L_q^{-1}(\PF-\L_1)P_1 +\Zb_1,
\end{equation}
where $\Zb_1$ is the contour integral of the term
$Z(\z)$ in \eqref{4:TransRes2}.

 As explained in Section 2, the nonzero
eigenvalues of $\PF_-P_1$ may converge only to $\L_1$,
and thus $(\PF_--\L_1)P_1$ is compact by
Proposition~\ref{2:PropComp}. Moreover, this
compactness is preserved after the multiplication by
any product of creation and annihilation operators.

 Now we
consider the remainder term,  $Z(\z)$ in
\eqref{4:TransRes2}. We are going  to
 show that for any operators of the type $\Tc, \Tc'$
  the integral of $\langle
x\rangle^{-\b+\d}\Tc Z(\z)\Tc'$
 along  $\G_0$ is bounded. Due to
  the arbitrariness of $\Tc, \Tc'$ this ,
  of course, implies the compactness
 required by the theorem.

 Let $N$ be some fixed, sufficiently large integer.
 We apply the resolvent
formula \eqref{4:TransRes2} $2N$ times to the first
factor in $Z(\z)$. This operation will produce terms
of order $-4$, $-6$, \dots,
 containing  factors $\Rm(\z)$
and $\bb$, and the remainder term of order $-2N$
containing these factors with, additionally, one
factor $\Rp(2\Bn+\z)$. This last  remainder term, for
$N$ sufficiently large,
 is compact by
Lemma~\ref{4:LemmaComp1}, even after the
multiplication by $\Tc$, $\Tc'$.  We will study its
spectral properties below.

 The leading terms in $Z(\z)$, having orders $-4$,
 $-6$,\dots,
will be transformed by repeatedly commuting $\bb$ and
$\Rm(\z)$ and then the resulting commutants again with
$\Rm(\z)$ and so on. Under commuting $\Rm(\z)$ with a
function, with $\Q$, or with $\Qa$, this factor
$\Rm(\z)$ moves to the left or to the right, and one
more product in the sum composing $\Tc Z(\z) \Tc'$
arises, of the order lower by $1$. In this commuting
procedure we aim for collecting the factors $\Rm(\z)$
together. As soon as we obtain a term with all
$\Rm(\z)$ standing together, we leave it alone and do
not transform any more. After sufficiently many
commutations, we arrive at a collection of terms  of
negative order, smaller than $-N$,  in $\Tc
H(\z)\Tc'$. all of which are compact by
Lemma~\ref{4:LemmaComp1}. The terms of  order $-N$ or
higher
  will have the form $G_1\Rm(\z)^kG_2$ with $k>1$ and
   some operators $G_1,G_2$. These terms vanish
 after integration along $\G_0$.

 Let us consider the structure of all remaining operators of
 order lower than $-N$ again. They will contain $\bb$
 and its derivatives as factors. The only term where
 only one factor $\bb$ is present, as it follows from
 \eqref{4:TransRes2}, will be the one of the form
 $2\bb (\PF_--\z)^{-2}$, obtained by commuting the first
 and second factors in $(\PF_--\z)(2\bb)(\PF_--\z)$.
 The integral of this term along the contour vanishes.
 All the remaining terms will contain at least two
 entries of $\bb$ or at least one derivative of $\bb$.
 By Proposition \ref{2:commutatorEstimate}, used with
 proper $l$, this means that each such term is bounded
 as acting into the  weighted space with weight
 $\langle
x\rangle^{-\b+\d}$. This boundedness property is
preserved after the contour integration.

The detailed combinatorics in the above reasoning as
well as the inductive procedure enabling us to pass
from $q=1$ to an arbitrary $q$, is explained in
\cite{RozTa}.
\end{proof}
Thus the operator $\SF_q$ is close to the projection
$P_q$. Together with the closeness of $\SF_q$ to
$Q_q$, this reasoning proves Theorem
\ref{4:ThmCloseness}.

Now we add a perturbation by a smooth  electric
potential $V(x)\in S_\b$. Similar to \cite{RozTa}
under such perturbation the spectral subspaces
'almost' do not change.
  Note, first of all, that the perturbation of $\PF_-$ by $V$ is
  relatively compact, therefore, again, the spectrum of the
   operator $\PF_-(V)=\PF_-+V$ between Landau levels
   is discrete. We  can change  the  contours
   $\G_q$ a little, so that they do not pass through the
   eigenvalues of $\PF_-(V)$.
We denote by $\Hc^V_q$ the spectral subspaces of
$\PF_-(V)$
 corresponding to the
spectrum inside $\G_q$, by $P^V_q$ the corresponding
spectral projections, and by $\Rm^V(\z)$ the resolvent
of $\PF_-(V)$.

\begin{proposition}\label{4:PropElPert} The projections
$P^V_q$ and $Q_q$ are close in the sense used  in
Theorem~\ref{4:ThmCloseness}. Moreover, $\langle
x\rangle^{-\b+\d}\Tc (P^V_q-Q_q)\Tc'$ is bounded for
any finite products $\Tc,\Tc'$ of creation and
annihilation operators.
\end{proposition}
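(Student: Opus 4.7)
\emph{Proof plan.} I would adapt the argument of Proposition~\ref{4:Prop:SclosetoP} to the perturbed operator $\PF_-(V)$, aiming first at the analogous decomposition
$$
\SF_q - P^V_q = C_q(\PF_-(V) - \L_q)P^V_q + \tilde{\Zb}_q,
$$
where $\tilde{\Zb}_q$ satisfies $\|\langle x\rangle^{-\b+\d}\Tc\,\tilde{\Zb}_q\,\Tc'\|<\infty$ for arbitrary products $\Tc,\Tc'$ of creation and annihilation operators. The starting point is the second resolvent identity $R^V_-(\zeta) = R_-(\zeta) - R_-(\zeta)VR^V_-(\zeta)$, iterated $N$ times. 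After contour integration over $\G_q$ (suitably displaced to avoid the discrete spectrum of $\PF_-(V)$), this represents $P^V_q - P_q$ as a finite sum of integrals of products $R_-(\zeta)(VR_-(\zeta))^k$ plus a remainder involving $R^V_-(\zeta)$ and $N$ factors of $V$.

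Each term of this expansion must admit the weighted bound with $\langle x\rangle^{-\b+\d}$. For $k\ge 2$, the term already carries decay $\langle x\rangle^{k\b}$, and since $\d<-\b$ by definition of $S_\b$, we have $k\b-\b+\d<0$, so the bound is immediate after commuting $\langle x\rangle^{-\b+\d}$ past $\Tc$, the resolvents and the operators $\Pi_j$ via Proposition~\ref{2:commutatorEstimate}. The delicate term is $k=1$. Writing
$$
R_-(\zeta)VR_-(\zeta) = VR_-(\zeta)^2 + [R_-(\zeta),V]R_-(\zeta), \qquad [R_-(\zeta),V]=-R_-(\zeta)[\PF_-,V]R_-(\zeta),
$$
the first piece vanishes after integration along $\G_q$ by residue calculus (self-adjointness of $\PF_-$ ensures its eigenvalues are semisimple, so $\int_{\G_q}R_-(\zeta)^2 d\zeta=0$), while the commutator piece contains $[\PF_-,V]=-2i\Qa(\pa V)-2i(\pd V)\Q$, whose coefficients $\pa V,\pd V$ lie in $S_{\b-\d}$ and hence admit the weighted bound directly.

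Combined with Proposition~\ref{4:Prop:SclosetoP} applied to $\SF_q-P_q$, this yields the desired decomposition of $\SF_q-P^V_q$. To conclude for $P^V_q-Q_q=(P^V_q-\SF_q)+(\SF_q-Q_q)$, the closeness of $\SF_q$ and $Q_q$ established in the proof of Theorem~\ref{4:ThmCloseness} must be refined to the same weighted form. This refinement uses Proposition~\ref{3:basicPropositionBP}(1), which isolates $\Zc_q[\bb]-C_q'\Bn^{q-1}\bb\in S_{\b-\d}$ as the $\langle x\rangle^{\b-\d}$-decaying remainder in the representation of $\SF_q-Q_q$ on $\Gc_q$ coming from \eqref{3:BP:equation}, and then exploits the identity $(\PF_--\L_q)\Qa^q v=2\Qa\sum_{k=0}^{q-1}\Qa^k\bb\Qa^{q-1-k}v$ for $v\in\Hc_0$ to show that the $\bb$-leading contributions of $\SF_q-Q_q$ and of $C_q(\PF_-(V)-\L_q)P^V_q$ cancel modulo weighted-bounded terms. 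Compactness of $\Tc(P^V_q-Q_q)\Tc'$ (closeness in the sense of Theorem~\ref{4:ThmCloseness}) then follows from the weighted bound, since $\b-\d<0$ converts into compactness via Lemma~\ref{4:LemmaComp1}.

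The main obstacle is precisely the last cancellation: the $\bb$-leading terms of the three sub-differences $P^V_q-P_q$, $P_q-\SF_q$, $\SF_q-Q_q$ have to be tracked simultaneously and shown to combine so that only contributions carrying either two perturbation factors or at least one derivative of the perturbation survive, thereby producing the $\langle x\rangle^{\b-\d}$ decay. The combinatorial structure parallels the induction on $q$ used in the proof of Proposition~\ref{4:Prop:SclosetoP} and can be organized in the same way, drawing on the bookkeeping from \cite{RozTa}.
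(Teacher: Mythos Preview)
Your core argument---iterate the second resolvent identity, commute $V$ past $R_-(\zeta)$ so as to collect resolvent powers, and observe that the terms surviving contour integration carry either two copies of $V$ or a derivative of $V$, hence the extra $\langle x\rangle^{-\d}$ decay---is exactly the paper's argument. But the paper organizes the proof more simply: it decomposes $P^V_q - Q_q = (P^V_q - P_q) + (P_q - Q_q)$ rather than passing through $\SF_q$ and a $V$-perturbed analogue of Proposition~\ref{4:Prop:SclosetoP}. The resolvent expansion then directly yields the weighted bound (and hence the closeness) for $P^V_q - P_q$, and Theorem~\ref{4:ThmCloseness} supplies the closeness of $P_q - Q_q$. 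No cancellation analysis enters.

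The cancellation you sketch is only needed if one insists on the weighted bound for the \emph{full} difference $P^V_q - Q_q$, not merely for $P^V_q - P_q$. The paper's proof in fact establishes the weighted bound only for the piece $P^V_q - P_q$; for $P_q - Q_q$ it invokes Theorem~\ref{4:ThmCloseness}, which gives compactness but not the weighted estimate. So either that weighted bound is being tacitly imported from the constructions in \cite{RozTa}, or the ``Moreover'' clause is slightly stronger than what the displayed argument justifies. Your instinct that some cancellation of $\bb$-leading contributions is required here is therefore reasonable, but the mechanism you propose---matching $C_q(\PF_-(V)-\L_q)P^V_q$ against the $\Zc_q[\bb]$-leading part of $\SF_q - Q_q$---would need substantially more detail: these two operators act naturally on different subspaces ($\Hc^V_q$ versus $\Gc_q$), and comparing them modulo weighted-bounded remainders is not immediate from what you have written.
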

\begin{proof} We will prove that the projection
$P^V_q$ is close to $P_q$, then the result will follow
from Theorem~\ref{4:ThmCloseness}.

We use the representation of projections $P^V_q$ and
$Q_q$ by means of resolvents and subtract:

\begin{gather}\label{4:PropElPert.2}
  \langle x\rangle^{-\b+\d}  P^V_q-P_q =-(2\pi i)^{-1}\langle x\rangle^{-\b+\d}\int_{\G_q}\Rm(\z)V
    \Rm^V(\z)d\z=\\ \notag
(2\pi i)^{-1}\sum_{k=1}^{2N-1}\int_{\G_q}
    \langle x\rangle^{-\b+\d}(-\Rm(\z)V)^k\Rm(\z)d\z \\ \notag+(2\pi
    i)^{-1}\int_{\G_q}\langle x\rangle^{-\b+\d}(-\Rm(\z)V)^N\Rm^V(\z)(-V\Rm(\z))^N
    d\z.
\end{gather}
The last term in \eqref{4:PropElPert.2} is bounded and
stays bounded after the multiplication by the creation
and annihilation operators, as soon as $N$ is large
enough, by Lemma~\ref{4:LemmaComp1}.  With the leading
terms in \eqref{4:PropElPert.2}, we can perform the
same procedure as when proving
Proposition~\ref{4:Prop:SclosetoP}. We  commute the
resolvent with $V$ and with the terms arising by
commutation and so on, aiming to collect the
resolvents together all the time. We arrive at a
number of terms of sufficiently negative order, thus
bounded before the  integration, and terms with all
entries of the resolvent collected together, and thus
vanishing after the integration. The boundedness
property with weight follows again, as in the proof of
Proposition \ref{4:Prop:SclosetoP}, from the fact that
after the commutation, all surviving terms will
contain at least two entries of $V$ or an entry of a
derivative of $V$.
\end{proof}

\section{Spectrum of Toeplitz-type operators}\label{Toeplitz}

We move on to the study of the splitting of Landau
levels of our operators. Similar to \cite{RaiWar},
\cite{MelRoz}, \cite{RozTa}, \cite{IwaTam1},
 the properties of the eigenvalues of
the perturbed operators are determined by the
properties of the spectrum of certain Toeplitz-type
operators.

Usually, by Toeplitz operator one understands an
operator of the form $\TF(W)=PWP$ where $P$ is the
orthogonal projection onto some subspace $\Gc$ in
$L_2$ and $W$ is the operator of multiplication by
some function. Alternatively, if we consider the
Toeplitz operator as acting in $\Gc$ it can be written
as $\TF(W)=PW$. Usually, the subspace $\Gc$ consists of  functions, related with analytical ones, for example, Hardy or Bergman spaces.

In \cite{RaiWar}, \cite{MelRoz} such operators were
considered, with $\Gc$ being one of Landau subspaces
and the electric potential $V$ acting as $W$. We will
study, as in \cite{RozTa}, this latter kind of
Toeplitz type operators, with the space $\Hc_0$ of
zero modes of the perturbed Pauli operator acting as
$\Gc$ and some differential operator acting as $W$.
The result below differs essentially from the one in
\cite{RozTa}.

 Let  $V, \bb$,  be  real
functions in $S_\b$, $\b<-2$. We consider  the
Toeplitz-type operator in $\Hc_0$:
\begin{equation}\label{5:Above.1}
    \TF_0 u= \TF_0(V) u = P_0 \Q^q (\PF_--\L_q+ V) \Qa^q u,\; u\in \Hc_0.
\end{equation}
This operator, by \eqref{2:ComRel.1}, corresponds to
the quadratic form
\begin{gather}\nonumber
    \tF^{V}[u]=((\PF_--\L_q+ V) \Qa^q u, \Qa^q u)=
     (\PF_+-\L_{q+1}) \|\Qa^{q+1}u\|^2\\ \label{5:Above.2} -
     \L_{q+1}\|\Qa^{q}u\|^2  +((V-2\bb) \Qa^q u, \Qa^q u)\;,u\in \Hc_0.
\end{gather}

We are going to study the spectrum of $\TF_0(V)$ . We
denote by $\l_n^{\pm}=\l_n^{\pm}(\TF_0(V))$ positive,
 resp., negative eigenvalues of the operator
 $\TF_0(V)$. The distribution functions $n_\pm(\l),\, \l>0,$
 are
 defined as $n_\pm(\l)=n_\pm(\l;\TF_0(V))=$ $\#\{n:\;\pm
 \l_n^{\pm}>\l\}$.
 The s-numbers $s_n$
of the operator $\TF_0(V)$ are just the absolute values
 of $\l_n^{\pm}$ ordered non-increasingly, and their
 distribution function equals $n(\l,\TF_0(V))=n_+(\l,\TF_0(V))+n_-(\l,\TF_0(V))$.

We denote by $\Vb=\Vb_q[V,\bb]$ the \emph{effective
weight}
\begin{equation}\label{5:Vb(x)}
\Vb[V,\bb]= C_q(V+2q\bb), \ C_q=q!(2\Bn)^q,
\end{equation}
and  set for any function $W$ $E_\pm(\l,W)=(2\pi)^{-1}\Bn\meas\{x\in\R^2:
\pm W(x)>\l\}$.
\begin{proposition}\label{5:EstimateAboveL}
For the eigenvalue distribution function $n_\pm(\l;\TF_0(V))$
of the operator $\TF_0(V)$ the following  estimate
holds:
\begin{equation}\label{5:EstimateGeneral}
n_\pm(\l;\TF_0(V))\le C\l^{\frac2\b},\ \l\to0
\end{equation}
Moreover, suppose that   the function
$E_\pm(\l,\Vb)$ (for one or for both signs)
satisfies the estimate
\begin{equation}\label{5:E>}
    E_\pm(\l,\Vb)\ge C' \l^{\frac2\b}, \l\to0
\end{equation}
and the regularity condition
\begin{equation}\label{5:E-regular}
    \lim_{\e\to
    0}\limsup_{\l\to 0}\frac{E_\pm(\l(1-\e),\Vb)}{E_\pm(\l,\Vb)}=1.
\end{equation}
 Then for the distribution
function $n_\pm(\l,\TF_0(V))$ of the eigenvalues of $\TF_0(V)$
the asymptotic formula holds
\begin{equation}\label{5:ToeplitzAsymp}
    n_\pm(\l,\TF_0(V))\sim E_\pm(\l,\Vb), \ \l\to 0.
\end{equation}
\end{proposition}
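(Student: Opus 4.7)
The plan is to reduce the quadratic form of $\TF_0(V)$ to that of a Toeplitz-type operator $P_0\Vb P_0$ on the zero-mode space $\Hc_0$, and then to invoke a Raikov--Warzel type spectral asymptotic for such Toeplitz operators adapted to $\Hc_0$.

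\emph{Step 1 (reduction to $P_0\Vb P_0$).} I start from the rewriting in \eqref{5:Above.2},
\[
\tF^V[u] = \|\Qa^{q+1}u\|^2 - \L_{q+1}\|\Qa^q u\|^2 + ((V-2\bb)\Qa^q u,\Qa^q u),\qquad u\in\Hc_0,
\]
and apply Proposition~\ref{3:basicPropositionBP}. Part (1) supplies $\|\Qa^{q+1}u\|^2 = C_{q+1}\|u\|^2 + (\Zc_{q+1}[\bb]u,u)$ and $\|\Qa^q u\|^2 = C_q\|u\|^2 + (\Zc_q[\bb]u,u)$; the identity $C_{q+1}=\L_{q+1}C_q$ cancels the constant contributions. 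Part (2) handles the last summand as $((V-2\bb)\Qa^q u,\Qa^q u)=(\Xc_q[\bb,V-2\bb]u,u)$. Reading off the leading parts via \eqref{3:BP:equationSlow} and \eqref{3:BP:equation.V.Slow}, and noting that cross-products such as $\bb V$ or $\bb^2$ lie in $S_{2\beta}\subset S_{\beta-\delta}$, I identify the effective weight $\Vb = C_q(V+2q\bb)$ and obtain
\[
\tF^V[u] = (\Vb u,u)+(Ru,u),\qquad u\in\Hc_0,
\]
with a remainder symbol $R\in S_{\beta-\delta}$. Equivalently, as self-adjoint operators on $\Hc_0$, $\TF_0(V) = P_0\Vb P_0 + P_0 R P_0$.

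\emph{Step 2 (Toeplitz asymptotics on $\Hc_0$).} The technical heart of the argument is the following analogue, for the perturbed zero-mode space $\Hc_0$, of the Raikov--Warzel \cite{RaiWar} and Melgaard--Rozenblum \cite{MelRoz} asymptotics for Toeplitz operators on the unperturbed Landau subspaces: for any real $W\in S_\g$, $\g<0$,
\[
n(\l,P_0 W P_0) = O(\l^{2/\g}),\qquad \l\to 0,
\]
and, provided $E_\pm(\l,W)$ satisfies \eqref{5:E>} and \eqref{5:E-regular}, also $n_\pm(\l,P_0 W P_0)\sim E_\pm(\l,W)$. This is proved by exploiting the explicit description of $\Hc_0$: every $u\in\Hc_0$ has the form $u=e^{-\Psi}h$ with $h$ entire and $\Psi=\Psi^\circ+\psi$, where $\Psi^\circ(z)=\tfrac{\Bn}{4}|z|^2$ and $\psi=O(\log|x|)$ (this logarithmic bound uses precisely $\bb\in L_1$, i.e.\ $\beta<-2$). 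Consequently the reproducing kernel of $\Hc_0$ has the same leading Gaussian diagonal behaviour as the Bargmann kernel of $\Lc_0^\circ$, and the standard coherent-state/phase-space computation reproduces the density $(2\pi)^{-1}\Bn$.

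\emph{Step 3 (conclusion).} Applied to $W=\Vb\in S_\beta$, Step 2 yields the unconditional upper bound \eqref{5:EstimateGeneral} for $P_0\Vb P_0$, and under \eqref{5:E>}--\eqref{5:E-regular} also the asymptotic \eqref{5:ToeplitzAsymp}. Applied to $W=R\in S_{\beta-\delta}$ it gives $n(\l,P_0 R P_0)=O(\l^{2/(\beta-\delta)})=o(\l^{2/\beta})$. The Weyl singular-number inequality
\[
n_\pm(\l(1+\e),A+B)\le n_\pm(\l,A)+n(\e\l,B),
\]
applied with $A=P_0\Vb P_0$ and $B=P_0 R P_0$, together with the regularity condition \eqref{5:E-regular}, transfers the asymptotics from $P_0\Vb P_0$ to $\TF_0(V)$ and simultaneously delivers \eqref{5:EstimateGeneral} unconditionally.

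\emph{Main obstacle.} The hardest part is Step 2: transplanting the Toeplitz spectral asymptotics from the exactly solvable Bargmann--Fock space $\Lc_0^\circ$ to the perturbed zero-mode space $\Hc_0$. The natural implementation compares reproducing kernels of the two spaces and shows that the at-most-logarithmic weight perturbation $\psi$ does not affect the leading-order distribution of eigenvalues; alternatively one runs a phase-space localization argument directly in $\Hc_0$. In either approach, it is precisely the hypothesis $\beta<-2$ (ensuring that $\psi$ is only a logarithmic perturbation of $\Psi^\circ$) that is used decisively.
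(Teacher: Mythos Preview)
Your Step~1 reduction is correct and matches the paper's proof exactly: both use Proposition~\ref{3:basicPropositionBP} to rewrite $\tF^V[u]=(\Vb u,u)+(\vb u,u)$ on $\Hc_0$ with $\Vb=C_q(V+2q\bb)\in S_\b$ and a remainder $\vb\in S_{\b-\d}$.

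The difference is in Step~2. You propose to establish the Toeplitz asymptotics on the perturbed zero-mode space $\Hc_0$ by \emph{transplanting} the Raikov--Warzel/Melgaard--Rozenblum results (which are for the unperturbed Landau subspaces $\Lc_q^\circ$) via a comparison of reproducing kernels, and you correctly flag this as the main obstacle. The paper bypasses this entirely: it cites Lemma~2.5 and Proposition~2.3 of Iwatsuka--Tamura \cite{IwaTam1}, which already prove precisely the needed eigenvalue bound $n(\l,P_0WP_0)=O(\l^{2/\g})$ and asymptotics $n_\pm(\l,P_0WP_0)\sim E_\pm(\l,W)$ for Toeplitz operators on the zero-mode space of a \emph{nonconstant} magnetic field. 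So the ``hardest part'' you identify is in fact a one-line citation to the right source; Raikov--Warzel is the wrong reference here because it treats only the constant-field case. Your sketch of how one would carry out the transplant (logarithmic bound on $\psi$, matching Gaussian diagonal of the reproducing kernel) is plausible in spirit but is not a proof, and Iwatsuka--Tamura's argument in \cite{IwaTam1} is in any case somewhat different (it is variational/localization rather than reproducing-kernel based).

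Given the Iwatsuka--Tamura input, your Step~3 (Weyl inequality plus regularity \eqref{5:E-regular} to absorb the $S_{\b-\d}$ remainder) is correct; the paper handles this slightly more economically by applying \cite{IwaTam1} directly to the combined weight $\Vb+\vb$ and then observing that $\vb$ does not affect the leading order.
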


The conditions of the form  \eqref{5:E>} and
\eqref{5:E-regular} are traditional in the study of
asymptotic properties of operators with (possibly)
non-power behavior of eigenvalues, see, e.g.,
\cite{Raikov1}, \cite{IwaTam1}, \cite{IwaTam2}. The
first of them indicates that the 'rate of decay' $\b$
is chosen sharply, so that various remainder terms
are, in fact, weaker than the leading one. The second
condition means that the function $E_\pm(\l,\Vb)$
grows sufficiently regularly. It enables the use of
various  kinds of perturbation techniques.

We note also that for $q=0$ and $V=0$, the effective
weight $\Vb$ vanishes. This property corresponds to the
fact that $\PF_-$ has $\Hc_0$ as its  space of zero
modes.

\begin{proof}
By Proposition \ref{3:basicPropositionBP},  the  expression
\eqref{5:Above.2}  can be transformed to
$$\tF^{V}[u]=(\Vb u,u)+(\vb u,u),\ u\in \Hc_0,$$
where $\vb\in S_{\b-\d}$ and $\Vb=\Vb_q[V,\bb]\in S_\b$. By Lemma
2.5 in \cite{IwaTam1},  for the operator defined by
this quadratic form, the asymptotic relation
$$n_{\pm}(\l)\sim (2\pi)^{-1}\Bn \meas\{x\in\R^2:\Vb(x)+\vb(x)\ge\l\}, \l\to 0,$$
holds, as soon as the measure on the right-hand side
grows not slower than $\l^{\frac2\b}$ and is regular,
i.e., if the conditions  \eqref{5:E>},
\eqref{5:E-regular} are satisfied. This reasoning
takes care of the  second part of the Proposition. The
first part follows  from Proposition 2.3 in
\cite{IwaTam1}, without qualified lower estimate. The
function $\vb$, due to its decay, does not contribute
to the main order of the eigenvalue estimates and
asymptotics.
\end{proof}

Now we can  establish  the spectral  estimates and
asymptotics for a similar Toeplitz operator on the
spectral subspace corresponding to the cluster around
the Landau level $\L_q$ for an arbitrary $q$.

\begin{proposition}\label{5:Toeplitz_q}
Let $b, V\in S_\b, \ \b<-2$. Consider the
Toeplitz-type operator
\begin{equation}\label{5:Toeplitz_q1}
    \TF_q(V)=P_q(\PF_--\L_q+V)P_q.
\end{equation}
Then for the distribution function of s-numbers of the
eigenvalues of $\TF_q(V)$ the estimate holds
\begin{equation}\label{5:Toeplitz_q2}
    n(\l,\TF_q(V))=O(\l^{\frac2\b}).
\end{equation}
If, moreover, the effective weight $\Vb$ defined in
\eqref{5:Vb(x)} satisfies the conditions \eqref{5:E>},
\eqref{5:E-regular} then the asymptotic formula
\begin{equation}\label{5:Toeplitz.qq}
    n_\pm(\l,\TF_q(V))\sim E_{\pm}(\l,V+2q\bb)
\end{equation}
 holds.
\end{proposition}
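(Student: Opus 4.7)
My plan is to reduce the statement for $\TF_q(V) = P_q(\PF_- - \L_q + V)P_q$ to the already-established Proposition~\ref{5:EstimateAboveL} for $\TF_0(V)$ by constructing a near-isometric intertwiner between $\Hc_0$ and $\Hc_q$, exploiting the machinery from Sections~\ref{Subspaces3} and~\ref{Projections4}. Set $M = \PF_- - \L_q + V$ and introduce
\begin{equation*}
R_q = C_q^{-1/2}\Qa^q P_0 \colon L_2 \to L_2,
\end{equation*}
whose range is $\Gc_q$. By Proposition~\ref{3:basicPropositionBP} and Lemma~\ref{3:LemComp}, $R_q^* R_q = P_0 + K_0$ on $\Hc_0$ with $K_0$ compact, while $R_q R_q^* = \SF_q$. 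A direct computation yields the key identity $R_q^* M R_q = C_q^{-1}\TF_0(V)$; combined with the homogeneity $E_\pm(\l, C_q W) = E_\pm(\l/C_q, W)$, Proposition~\ref{5:EstimateAboveL} then gives $n_\pm(\l, R_q^* M R_q) = O(\l^{2/\b})$ unconditionally, and $n_\pm(\l, R_q^* M R_q) \sim E_\pm(\l, V + 2q\bb)$ under the regularity assumptions~\eqref{5:E>}--\eqref{5:E-regular}.

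Next I would set $J_q = P_q R_q \colon \Hc_0 \to \Hc_q$ and compare $J_q^* \TF_q(V) J_q$ with $R_q^* M R_q$. Since the range of $R_q$ equals $\Gc_q$, we have $Q_q R_q = R_q$ and $R_q^* Q_q = R_q^*$; writing $P_q = Q_q + \Delta$ with $\Delta = P_q - Q_q$ then gives
\begin{equation*}
J_q^* \TF_q(V) J_q - R_q^* M R_q = R_q^* M \Delta R_q + R_q^* \Delta M R_q + R_q^* \Delta M \Delta R_q,
\end{equation*}
together with $J_q^* J_q - I_{\Hc_0} = K_0 + R_q^* \Delta R_q$, which is compact on $\Hc_0$ by Theorem~\ref{4:ThmCloseness}.

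The core of the proof is to show that the three correction operators above have singular value distributions of order $o(\l^{2/\b})$, hence are asymptotically negligible compared with the leading term $E_\pm(\l, V + 2q\bb) \asymp \l^{2/\b}$ supplied by~\eqref{5:E>}. For this I would use the refined decomposition $\SF_q - P_q = C_q(\PF_- - \L_q)P_q + \Zb_q$ from Proposition~\ref{4:Prop:SclosetoP}: the leading piece $(\PF_- - \L_q)P_q$ is absorbed into the cluster whose eigenvalue distribution is exactly what is being estimated, while the remainder $\Zb_q$ satisfies the weighted boundedness of $\langle x\rangle^{-\b+\d}\Tc \Zb_q \Tc'$ for arbitrary polynomial products $\Tc, \Tc'$ in $\Q, \Qa$. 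This extracts an extra decay $\langle x\rangle^{\b-\d}$ from every $\Delta$, and Proposition~\ref{4a:low order} then supplies the required $o(\l^{2/\b})$ bound. Weyl and Ky Fan inequalities give $n_\pm(\l, J_q^* \TF_q(V) J_q) \sim n_\pm(\l, R_q^* M R_q)$, and since $J_q^* J_q = I + \text{(compact)}$, the polar decomposition of $J_q$ produces a partial isometry $\Hc_0 \to \Hc_q$ of Fredholm index zero that transfers the asymptotics to $\TF_q(V)$ itself. The main obstacle is the bookkeeping in this step: one must extract the weight $\langle x\rangle^{-\b+\d}$ from every occurrence of $\Delta = P_q - Q_q$, which requires establishing the analogue of Proposition~\ref{4:Prop:SclosetoP} for the difference $Q_q - \SF_q$ by the same resolvent--commutator arithmetic used there.
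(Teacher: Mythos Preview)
Your strategy — reduce $\TF_q(V)$ to $C_q^{-1}\TF_0(V)$ via the intertwiner $\Qa^qP_0$, then invoke Proposition~\ref{5:EstimateAboveL} — is exactly the paper's, and your identification of the self-referential piece $(\PF_--\L_q)P_q$ as the part to be ``absorbed into the cluster'' is the right idea. The difference is that the paper never passes through $Q_q$ at all: it substitutes $P_q=\SF_q-\bigl(C_q(\PF_--\L_q)P_q+\Zb_q\bigr)$ directly into $P_qMP_q$, obtaining
\[
\TF_q(V)=\SF_qM\SF_q+\TF_q(V)^2\SF_q+\SF_q\TF_q(V)^2+\TF_q(V)^3+\Yb,
\]
where $\Yb$ collects the $\Zb_q$– and $P_qVP_q$–contaminated terms and is $o(\l^{2/\b})$ by Proposition~\ref{4a:low order}. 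The quadratic and cubic terms in $\TF_q(V)$ have $s$-numbers $s_n(\TF_q)^2,s_n(\TF_q)^3=o(s_n(\TF_q))$, so a short bootstrap gives both the estimate and the asymptotics from those of $\SF_qM\SF_q$, which is your $R_qR_q^*MR_qR_q^*$ and reduces to $\TF_0(V)$ by the substitution $v=P_0\Q^q u$.

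Your route through $\Delta=P_q-Q_q$ forces you to control $Q_q-\SF_q$ with the same weighted remainder structure as in Proposition~\ref{4:Prop:SclosetoP}, and this is the one genuine soft spot in the proposal. You say it follows ``by the same resolvent--commutator arithmetic,'' but $Q_q$ is the orthogonal projection onto $\Gc_q$, not a Riesz integral, so that machinery does not apply. What does work is the algebraic formula $Q_q=\GF(\GF^*\GF)^{-1}\GF^*$ with $\GF=\Qa^qP_0$: Proposition~\ref{3:basicPropositionBP} gives $\GF^*\GF=C_qP_0+P_0\Zc_q[\bb]P_0$ on $\Hc_0$, whence $Q_q-\SF_q=C_q^{-1}\GF\bigl[(I+C_q^{-1}P_0\Zc_q[\bb]P_0)^{-1}-I\bigr]\GF^*$, and the required weighted bound comes from $\Zc_q[\bb]\in S_\b$. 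So your argument can be completed, but at the cost of this extra computation plus the messier bookkeeping you anticipate; the paper's direct use of $\SF_q$ sidesteps both, since $\SF_q=R_qR_q^*$ already matches your intertwiner and no orthogonal projection onto $\Gc_q$ is ever needed.
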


\begin{proof}
 By Proposition \ref{4:Prop:SclosetoP}, the spectral
projection $P_q$ can be approximated by $\SF_q=C_q
\Qa^q P_0 \Q^q$. We use \eqref{4:SclosetoPbest} to
obtain
\begin{gather}\nonumber
\TF_q(V)=\SF_q(\PF_--\L_q+V)\SF_q
+(P_q(\PF_--\L_q)P_q+\Zb_q)(\PF_--\L_q+V)\SF_q\\\label{5:Toeplitzq.1}
+\SF_q(\PF_--\L_q+V)(P_q(\PF_--\L_q)P_q+\Zb_q) +\\
\nonumber(P_q(\PF_--\L_q)P_q+\Zb_q)(\PF_--\L_q+V)(P_q(\PF_--\L_q)P_q+\Zb_q).
\end{gather}
We re-arrange \eqref{5:Toeplitzq.1} to separate the
leading terms:
\begin{gather*}
    \TF_q(V)=\SF_q(\PF_--\L_q+V)\SF_q+P_q(\PF_--\L_q+V)P_q(\PF_--\L_q+V)P_q\SF_q\\ +\SF_q
    P_q(\PF_--\L_q+V)P_q(\PF_--\L_q+V)P_q+\\
P_q(\PF_--\L_q+V)P_q(\PF_--\L_q+V)P_q(\PF_--\L_q+V)P_q
+\Yb.
\end{gather*}
So, we have
\begin{equation}\label{5:Toeplitzq.3}
\TF_q(V)=\SF_q(\PF_--\L_q+V)\SF_q
+\TF_q(V)^2\SF_q+\SF_q\TF_q(V)^2+\TF_q(V)^3 +\Yb.
\end{equation}
Let us consider the structure of the remainder
operator $\Yb$. On the one hand, it contains terms
having the factor $\Zb_q$. By Proposition
\ref{4:Prop:SclosetoP}, the operator $\langle
x\rangle^{-\b+\d}(\PF_+)^N\Zb_q$ is bounded for $N$
that can be chosen arbitrarily large. Thus, by
Proposition \ref{4a:low order}, the s-numbers of
$\Zb_q$ satisfy the estimate
 $n(\l,\Zb_q)=o(\l^{\frac2\b}), \ \l\to 0$. Therefore
 all terms in $\Yb$ containing $\Zb_q$ satisfy the
 same kind of s-numbers estimate.

 The terms in $\Yb$ not containing $\Zb_q$, contain as
 factors the function
 $V$, the projection $P_q$ and some compact operators
 that remain compact after the multiplication by any power of
 $\PF_+$. For the operator $V(\PF_+)^{-N}$, by
 Proposition \ref{4a:low order}, the estimate $n(\l,
 V(\PF_+)^{-N})=O(\l^{\frac2\b})$ holds. After the
 multiplication by a compact operator, the $O$ symbol
 can be replaced by $o$ in this formula. As a result,
 we obtain
 \begin{equation}\label{5:Toeplitzq.4}
    n(\l, \Yb)=o(\l^{\frac2\b}), \l\to 0.
\end{equation}

Now consider the second, third, and fourth terms on
the right-hand side in \eqref{5:Toeplitzq.3}. They
contain the square of the operator $\TF_q(V)$
multiplied by some bounded operators. Therefore the
rate of decay of $s$-numbers of these terms is faster
than the rate of decay of the ones of $\TF_q(V)$. So,
the estimate \eqref{5:Toeplitz_q2} or the asymptotic
formula \eqref{5:Toeplitz.qq} will be proved as soon
as we establish these formulas for the operator
$\Db=\SF_q(\PF_--\L_q+V)\SF_q$. The quadratic form of
this operator is $(\Db u,u)=((\PF_--\L_q+V)\SF_q u,
\SF_q u)$. Recalling the definition of $\SF_q$ in
\eqref{4:ApproxProj}, we have
\begin{equation*}
    (\Db u,u)=C_q(\Qa^q P_0 \Q^q u, (\PF_--\L_q+V)\Qa^q P_0 \Q^q
    u).
\end{equation*}
We set here $v=P_0\Q^q u, \ v\in \Hc_0$, to get
\begin{equation}\label{5:Toeplitzq.5}
(\Db u,u)=C_q^{-1}(\Qa^q v, (\PF_--\L_q+V)\Qa^q v), \
v\in \Hc_0.
\end{equation}

The spectral estimate and, under the conditions
\eqref{5:E>}, \eqref{5:E-regular}, the  asymptotics
for the operator defined by the latter quadratic form
is given by Proposition \ref{5:EstimateAboveL}, where the factor $C_q^{-1}$ is responsible for the replacement of $\Vb$ by $C_q^{-1}\Vb=V+2q\bb$. This
result carries over to the quadratic form $(\Db u, u)$
using the fact that $u=C_q(1+K)\Qa^q v$ for some
compact operator $K$, which is explained in
\cite{RozTa}.
\end{proof}

The operators of the above type will be used in the
next Section in order to find the leading term in the
eigenvalue asymptotics in clusters. Another type of
Toeplitz operators will be needed in order to perform
a block digitalization of the Pauli operator.

\begin{proposition}\label{5:Prop.counterdiag}Let $W$ be a function in $S_\b, \b<-2$.
Consider the operator $\TF'(W)=(1-P_q)W P_q$. Then for
the distribution function of singular numbers of
$\TF'(W)$
\begin{equation}\label{5:counterdiag1}
    n(\l, \TF'(W))=o(\l^{\frac{2}{\b}}), \ \l\to 0.
\end{equation}
\end{proposition}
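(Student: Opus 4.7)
The plan is to exploit that while $W\in S_\b$ gives only $O(\l^{2/\b})$ for the $s$-numbers via Proposition~\ref{4a:low order}, the commutator $[W,P_q]$ picks up extra decay by Proposition~\ref{2:commutatorEstimate} and behaves as an object in the strictly smaller class $S_{\b-\d}$. Since $\b-\d<\b<0$ forces $2/(\b-\d)>2/\b$, we have $\l^{2/(\b-\d)}=o(\l^{2/\b})$ as $\l\to 0$, which will supply the desired $o$-improvement over the $O$-estimates of Proposition~\ref{5:Toeplitz_q}.

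First I would use $(1-P_q)P_q=0$ together with $P_q=P_q^2$ to rewrite
\begin{equation*}
  \TF'(W)=(1-P_q)WP_q=(1-P_q)[W,P_q]P_q.
\end{equation*}
Applying Proposition~\ref{2:commutatorEstimate} with $\n=\b$ and $l=-\b+\d$ shows that $[W,P_q]\langle x\rangle^{-\b+\d}$ extends to a bounded operator $B$ on $L_2$, so $[W,P_q]=B\langle x\rangle^{\b-\d}$, and therefore
\begin{equation*}
  \TF'(W)=(1-P_q)\,B\,\langle x\rangle^{\b-\d}P_q.
\end{equation*}
It remains to control the singular values of $\langle x\rangle^{\b-\d}P_q$.

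The key step is then to insert the operator identity $P_q=\PF_+^{-N}(\PF_+^N P_q)$, valid because $\PF_+^N P_q$ is bounded on $L_2$ for every $N$: this follows by induction, using $\PF_+=\PF_-+2\Bb$, the boundedness of $\Bb$ together with all its derivatives, and the fact that $\PF_-^k P_q$ is bounded for every $k$ by functional calculus for $\PF_-$ on the bounded interval $\d_q$. Applying Proposition~\ref{4a:low order} to the multiplier $V=\langle x\rangle^{\b-\d}\in S_{\b-\d}$ with $N$ taken sufficiently large gives $n(\l,\langle x\rangle^{\b-\d}\PF_+^{-N})=O(\l^{2/(\b-\d)})$. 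Combining with the bounded factors $(1-P_q)B$ on the left and $\PF_+^N P_q$ on the right via the Weyl-type inequality $n(\l,CXD)\le n(\l/(\|C\|\|D\|),X)$ produces $n(\l,\TF'(W))=O(\l^{2/(\b-\d)})=o(\l^{2/\b})$, as required.

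The step I expect to require the most attention is the insertion $P_q=\PF_+^{-N}(\PF_+^N P_q)$, which tacitly assumes that $\PF_+$ is injective on $\Hc_q$. For $q\ge 1$ this is immediate since $\PF_+|_{\Hc_q}$ has spectrum near $\L_q+2\Bn>0$ and hence bounded away from zero; for $q=0$ one uses $\PF_+u=2\Bb u$ for $u\in\Hc_0=\ker\PF_-$ together with $\Bb=\Bn+\bb>0$ almost everywhere. If one wishes to avoid this point, the entire argument can be rerun with $(\PF_++c)^{-N}$ for any fixed $c>0$, since the proof of Proposition~\ref{4a:low order} relies only on semigroup domination and is insensitive to such a shift.
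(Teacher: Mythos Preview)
Your argument is correct and is in fact more streamlined than the paper's. Both proofs start identically: rewrite $\TF'(W)=(1-P_q)[W,P_q]P_q$ and use Proposition~\ref{2:commutatorEstimate} to factor $[W,P_q]=B\langle x\rangle^{\b-\d}$ with $B$ bounded, so that the task reduces to estimating the singular numbers of $\langle x\rangle^{\b-\d}P_q$. From here the paper splits cases: for $q=0$ it invokes the Toeplitz estimate of Proposition~\ref{5:EstimateAboveL} on $\Hc_0$; for $q>0$ it writes $P_q=\SF_q+(P_q-\SF_q)$, uses the factor $P_0$ inside $\SF_q$ to reduce the main piece to the $q=0$ case, and handles the remainder via Proposition~\ref{4:Prop:SclosetoP} and Proposition~\ref{4a:low order}. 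Your route bypasses the $\SF_q$ machinery entirely by inserting $P_q=\PF_+^{-N}(\PF_+^N P_q)$ and appealing once to Proposition~\ref{4a:low order} with the weight $\langle x\rangle^{\b-\d}\in S_{\b-\d}$; since $2/(\b-\d)>2/\b$, the $O(\l^{2/(\b-\d)})$ bound is already $o(\l^{2/\b})$.

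Two small remarks. First, the boundedness of $\PF_+^N P_q$ follows immediately from Lemma~\ref{4:LemmaComp2}, since $\PF_+^N P_q=(\Q\Qa)^N P_q$ is a product of creation/annihilation operators and the projection $P_q$; your inductive justification via $\PF_+=\PF_-+2\Bb$ works too, but is more laborious than necessary. Second, the paper itself uses $\PF_+^{-N}$ in Proposition~\ref{4a:low order} without comment, so your suggested replacement by $(\PF_++c)^{-N}$ is a sensible precaution but not strictly needed for consistency with the rest of the argument. The advantage of your approach is brevity and uniformity in $q$; the paper's approach keeps the analysis within the Toeplitz framework on $\Hc_0$, which is the organizing theme of Section~\ref{Toeplitz}.
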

\begin{proof}  The quadratic form of the operator
$\TF'(W)^*\TF'(W)$ equals
\begin{gather}\label{5:counterdiag2}
    (\TF'(W)u,\TF'(W)u)=((1-P_q)Wu,(1-P_q)Wu)\\ \nonumber =([W,P_q]P_qu, [W,P_q]P_qu), u\in \Hc_q.
\end{gather}
 Consider the case $q=0$ first. By Proposition \ref{4a:low order} the operator $[W,P_0]$ can be
represented as
\begin{equation}\label{5:countergiagA}
[W,P_0]= L\langle x\rangle^{\b-\d}
\end{equation}
for some bounded operator $L$.    By Proposition
\ref{5:EstimateAboveL}, the operator $[W,P_0]$ has
singular numbers with the required decay rate.

For an arbitrary $q>0$ by Proposition
\ref{2:commutatorEstimate},
\begin{equation}\label{5:countergiagB}
[W,P_q]= L\langle x\rangle^{\b-\d};
\end{equation}
We use the approximation of the projection $P_q$ found
in Section \ref{Projections4}:
\begin{equation}\label{5:counterdiagC}
    [W,P_q]P_q= L\langle x\rangle^{\b-\d} \SF_q+ L\langle x\rangle^{\b-\d}
    (P_q-\SF_q),
\end{equation}
where $\SF_q$ is defined in \eqref{4:ApproxProj}. The
spectral estimate for the first,  leading term in
\eqref{5:counterdiagC} involves the projection $P_0$
and the task of estimating its singular numbers
reduces to the already considered case $q=0$, see
\eqref{5:Toeplitzq.5}. By Proposition
\ref{4:Prop:SclosetoP}, the second term in
\eqref{5:counterdiagC} can be written as $L\langle
x\rangle^{\b-\d}\PF_+^{-N}K$ with as large $N$ as
needed and a compact operator $K$. The required
spectral estimate for this operator follows now from
the second part of Proposition~\ref{4a:low order}.
\end{proof}

\section{Perturbed eigenvalues}
Now we are able to establish our main result about
eigenvalue asymptotics and estimates for the perturbed
Schr\"odinger and Pauli operators. For a self-adjoint
 operator $L$, we denote by $N(\l,\m)=N(\l,\m;L)$ the
 number of eigenvalues of $L$ in the interval
 $(\l,\m)$.

\begin{theorem}\label{6:MainTheo}
Let $V,\bb\in S_\b, \b<-2$. Fix an integer $q\ge0$ and
let $\l_\pm$ be some fixed real numbers,
 $\l_{\pm}\gtrless \L_q$, $|\l_\pm-\L_q|<\Bn$, such that
they are not the points of the spectrum of $\PF_-+V$.

For  $\PF_-(V)$ the following estimates hold for
$\l\to 0+$
\begin{equation}\label{6:MainTheoEst}
N(\L_q+\l,\l_+;\PF_-(V))=O(\l^{\frac2\b}),\
N(\l_-,\L_q-\l; \PF_-(V))=O(\l^{\frac2\b}).
\end{equation}
If the effective weight $\Vb_q[V,\bb]=C_q(V+2q\bb)$ or, what is equivalent, $V+2q\bb$
 satisfies the conditions
\eqref{5:E>}, \eqref{5:E-regular} with the sign "$+$"
then, asymptotically,
\begin{equation}\label{6:MainTheoAs+}
    N(\L_q+\l,\l_+;\PF_-(V))\sim E_+(\l,V+2q\bb),\ \l\to 0,\
    \l>0.
\end{equation}
If the effective weight satisfies the
conditions \eqref{5:E>}, \eqref{5:E-regular} with the
sign "$-$" then, asymptotically,
\begin{equation}\label{6:MainTheoAs-}
    N(\l_-,\L_q-\l,;\PF_-(V))\sim E_-(\l,V+2q\bb),\ \l\to 0,\
    \l>0.
\end{equation}

Similar results hold for the  operator $\PF_+( V)$ and
for the Schr\"odinger operator $\HF(V)$  with the
following obvious modifications, corresponding to
\eqref{3:DifferentOperators}. For the Pauli operator
$\PF_+ (V)$ one should replace in the estimates of the
form \eqref{6:MainTheoEst} and in the asymptotic
relations of the form $\L_q$ by $\L_{q+1}$, $\l_\pm$
by $\l_\pm+2\Bn$ and $V$ by $V+2\bb$. For  the
Schr\"odinger  operator $\HF(V)$  one should replace
$\L_q$ by $\L_q+\Bn$, $\l_\pm$ by $\l_\pm+\Bn$ and $V$
by $V+\bb$.\end{theorem}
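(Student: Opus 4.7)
The plan is to reduce the counting function of eigenvalues of $\PF_-(V)$ inside the cluster around $\L_q$ to the spectral distribution of the Toeplitz-type operator $\TF_q(V)=P_q(\PF_--\L_q+V)P_q$ analysed in Proposition~\ref{5:Toeplitz_q}. Thanks to the identities \eqref{3:DifferentOperators}, the assertions for $\HF(V)$ and $\PF_+(V)$ follow from the one for $\PF_-(V)$ with the indicated shifts of the Landau level and replacement of $V$ by $V+\bb$, resp.\ $V+2\bb$; so I would only treat $\PF_-(V)$. Since the spectrum of $\PF_-(V)$ between consecutive Landau levels is discrete with accumulation only at the Landau levels themselves, and since $\l_\pm$ avoid this spectrum, the eigenvalues lying in the cluster around $\L_q$ coincide, up to a finite-rank correction, with those of the restriction $P^V_q(\PF_-(V)-\L_q)P^V_q$ to the invariant subspace $\Hc^V_q=\Ran P^V_q$. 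These in turn correspond to the positive, resp.\ negative, eigenvalues of this restriction, whose distribution functions give $N(\L_q+\l,\l_+;\PF_-(V))$ and $N(\l_-,\L_q-\l;\PF_-(V))$ up to such finite corrections.

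Next I would compare $P^V_q(\PF_-(V)-\L_q)P^V_q$ with $\TF_q(V)$ by writing
\[
P^V_q(\PF_-(V)-\L_q)P^V_q=\TF_q(V)+\Rb,
\]
with $\Rb=(P^V_q-P_q)(\PF_-(V)-\L_q)P^V_q+P_q(\PF_-(V)-\L_q)(P^V_q-P_q)$. Proposition~\ref{4:PropElPert} asserts that $P^V_q-P_q$ factorises as $\langle x\rangle^{\b-\d}L$ with $L$ bounded, and this factorisation persists after multiplication by any product of creation and annihilation operators. Combined with the $o$-estimate \eqref{4a:low order2} of Proposition~\ref{4a:low order}, applied to operators of the form $\langle x\rangle^{\b-\d}\PF_+^{-N}\cdot (\text{compact})$, this yields $n(\l,\Rb)=o(\l^{2/\b})$ as $\l\to 0$. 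Proposition~\ref{5:Toeplitz_q} then supplies for $\TF_q(V)$ both the estimate $n_\pm(\l,\TF_q(V))=O(\l^{2/\b})$ and, under the hypotheses \eqref{5:E>}--\eqref{5:E-regular}, the asymptotics $n_\pm(\l,\TF_q(V))\sim E_\pm(\l,V+2q\bb)$. The Ky~Fan inequality $n_\pm(\l_1+\l_2,A+B)\le n_\pm(\l_1,A)+n_\pm(\l_2,B)$ transfers these conclusions to $\TF_q(V)+\Rb$, since under \eqref{5:E-regular} the $o(\l^{2/\b})$ perturbation $\Rb$ is absorbed into the leading term, yielding \eqref{6:MainTheoEst} and the asymptotic formulas \eqref{6:MainTheoAs+}, \eqref{6:MainTheoAs-}.

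The main obstacle I anticipate is the verification of the $o(\l^{2/\b})$ bound on $\Rb$: every term produced when $P^V_q$ is replaced by $P_q$ must have its singular-number distribution \emph{strictly} of lower order than $\l^{2/\b}$, so that it does not spoil the leading asymptotic. This hinges on the sharpened form of Proposition~\ref{4:Prop:SclosetoP}, which separates the compact leading piece $(\PF_--\L_q)P_q$ from a weighted-bounded remainder, together with the $o$-version in the second part of Proposition~\ref{4a:low order} that upgrades $O$ to $o$ upon multiplication by any compact operator. Once these ingredients are in place, the reduction from $\PF_-(V)$ to $\TF_q(V)$ is routine, and the theorem follows.
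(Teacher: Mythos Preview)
Your approach is correct but takes a genuinely different route from the paper's. The paper block-diagonalises $\PF_-(V)$ with respect to $P_q$ (the spectral projection of the \emph{unperturbed} $\PF_-$), writing $\PF_-(V)=L_0+L_1$ with $L_0=P_q(\PF_-+V)P_q+(1-P_q)(\PF_-+V)(1-P_q)$ and $L_1=(1-P_q)VP_q+P_qV(1-P_q)$. Since $P_q$ commutes with $\PF_-$, the off-diagonal piece $L_1$ involves only $V$, and its singular-number estimate $n(\l,L_1)=o(\l^{2/\b})$ is exactly Proposition~\ref{5:Prop.counterdiag}; a standard bracketing inequality of the form $N(\m_1,\m_2;L_0+L_1)\le N(\m_1-\t,\m_2+\t;L_0)+2n(\t;L_1)$ then finishes the job. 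Your route instead starts from the exact spectral projection $P^V_q$ of $\PF_-(V)$, which makes step one trivial, and pushes all the analytic work into the comparison $P^V_q\to P_q$. This bypasses Proposition~\ref{5:Prop.counterdiag} entirely, at the cost of needing a sharper consequence of Proposition~\ref{4:PropElPert} than its statement literally gives.

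Two points to be careful about. First, Proposition~\ref{4:PropElPert} is stated for $P^V_q-Q_q$, not for $P^V_q-P_q$; the weighted boundedness of $\langle x\rangle^{-\b+\d}\Tc(P^V_q-P_q)\Tc'$ that you invoke is established in its \emph{proof} (via the resolvent expansion \eqref{4:PropElPert.2} and the observation that all surviving terms carry either two copies of $V$ or a derivative of $V$), so you should cite that argument rather than the statement. Second, the bare factorisation $P^V_q-P_q=\langle x\rangle^{\b-\d}L$ with $L$ bounded is not by itself enough to feed into Proposition~\ref{4a:low order}; you need the stronger form with $\Tc=(\Q\Qa)^N$ so that $P^V_q-P_q=\langle x\rangle^{\b-\d}\PF_+^{-N}L'$ with $L'$ bounded, and only then does the estimate $n(\l,\langle x\rangle^{\b-\d}\PF_+^{-N})=O(\l^{2/(\b-\d)})=o(\l^{2/\b})$ apply. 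You do mention that the factorisation ``persists after multiplication by any product of creation and annihilation operators'', which is precisely what is required; just make this step explicit. Your closing reference to Proposition~\ref{4:Prop:SclosetoP} (about $\SF_q-P_q$) is a bit misplaced here, since your remainder $\Rb$ involves $P^V_q-P_q$ rather than $\SF_q-P_q$; that proposition is already absorbed into the proof of Proposition~\ref{5:Toeplitz_q}.
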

\begin{proof} We will use the following statement about
the eigenvalue distribution of perturbed operators. If
$L_0,L_1$ are two self-adjoint operators, moreover,
$L_1$ is compact, then
\begin{gather}\label{6:perturb}
N(\m_1,\m_2;L_0+L_1)\le N(\m_1-\t_1,
\m_2+\t_2;L_0)+n(\t_1;L_1)+n(\t_2,L_1)
\end{gather}
for any interval $(\m_1,\m_2)$ and any positive
numbers $\t_1,\t_2$. The proof of \eqref{6:perturb}
can be found, e.g., in \cite{Raikov1}, Lemma 5.4.

We show now the upper asymptotic estimate in
\eqref{6:MainTheoAs+}. For a fixed $q$, we take as
$L_0$ the operator
\begin{equation}\label{6:L0}L_0=P_q(\PF_-+V)P_q+(1-P_q)(\PF_-+V)(1-P_q),\end{equation}
and as $L_1$ the operator
$$L_1=(1-P_q)VP_q+P_qV(1-P_q),$$
so that $L_0+L_1=\PF_-+V$. We fix some $\e>0$, and
apply \eqref{6:perturb} for $\m_1=\L_q+\l, \l_+$,
$\t_1=\t_2=\e\l$. The spectrum of $L_0$ is the union
of the spectra of the summands in \eqref{6:L0}. The
asymptotics of the eigenvalues of $P_q(\PF_-+V)P_q$ is
given by the Proposition \ref{5:Toeplitz_q}. The
second term in \eqref{6:L0} contributes to the
spectrum near $\L_q$ only with finitely many  points.
Thus, $N(\L_q+\m_1-\t_1, \L_q+\m_2+\t_2;L_0)\sim
E_+(\l(1-\e); V+2qb)$. On the other hand, for the
spectrum of the operator $L_1$, by Proposition
\ref{5:Prop.counterdiag}, we have $n(\e\l,
L_1)=o((\e\l)^{\frac2\b})$. We substitute these
asymptotic estimates into \eqref{6:perturb}, divide by
$E_+(\l,V+2q\bb)$ and pass to $\limsup$ as $\l\to 0$. We
arrive at
$$\limsup_{\l\to0} \frac{N(\L_q+\l, \l_+ , \PF_-+V)}{E_+(\l(1-\e); V+2q\bb)}\le 1.$$
Due to the arbitrariness of $\e$, by our assumptions,
this implies the upper asymptotic estimate in
\eqref{6:MainTheoAs+}.

 All other upper estimates in
the Theorem are established analogously. The lower
asymptotic estimate in \eqref{6:MainTheoAs+},
\eqref{6:MainTheoAs-} is established in the same way,
just interchanging $L_0$ and $L_1$ in
\eqref{6:perturb}.
\end{proof}

 \end{document}